\theoremstyle{definition}
\newtheorem{dfn}{Definition}[section]
\theoremstyle{plain}
\newtheorem{prop}[dfn]{Proposition}
\newtheorem{thm}[dfn]{Theorem}
\newtheorem*{thm*}{Theorem}
\newtheorem{cor}[dfn]{Corollary}
\numberwithin{equation}{section}
\title{REDUCTION THEOREM FOR SUPPORT $\tau$-TILTING MODULES OVER GROUP ALGEBRAS\thanks{MSC2020: 16G10, 20C20.}}
\author{NAOYA HIRAMAE\thanks{Department of Mathematics, Kyoto University, Kitashirakawa Oiwake-cho, Sakyo-ku, Kyoto 606-8502, Japan. E-mail: hiramae.naoya.58r@st.kyoto-u.ac.jp}}
\date{}
\begin{document}

\maketitle

\begin{abstract}
    \indent In studying the structure of derived categories of module categories of group algebras or their blocks, it is fundamental to classify support $\tau$-tilting modules. Koshio and Kozakai (\cite{KK1}, \cite{KK2} and \cite{KK3}) showed that the structure of support $\tau$-tilting modules over blocks of finite groups can be reduced to that of their subgroups under suitable conditions. We show that the `quotient reduction' is also valid under suitable conditions.\\
\end{abstract}

\section{Introduction}

Modular representation theory of finite groups was initiated by Brauer around 1920, yet many fundamental problems are still open. Among them ``Broué's Abelian Defect Group Conjecture'', a conjecture concerning derived equivalences of modular representations of finite groups, is well-known \cite{B}. Derived equivalent algebras of finite dimension are known to be classified by tilting complexes \cite{R}. Among all tilting complexes, two-term tilting complexes are in bijection with support $\tau$-tilting modules \cite{AIR}. Hence, by examining the support $\tau$-tilting modules over group algebras we can partially classify their derived equivalence classes. Moreover, support $\tau$-tilting modules correspond bijectively with categorical objects such as functorially finite torsion classes \cite{AIR}, two-term simple-minded collections \cite{KY}, intermediate $t$-structures \cite{BY}, left finite semibricks \cite{A} and more. Therefore, it is important to classify support $\tau$-tilting modules over group algebras in the study of modular representations not only because it is expected to lead to the resolution of Broué's Abelian Defect Group Conjecture, but also because it implies the classification of many other categorical objects over group algebras.\\
\indent In \cite{KK1}, \cite{KK2} and \cite{KK3}, Koshio and Kozakai found that under suitable conditions the structure of support $\tau$-tilting modules on blocks of finite groups coincides with that on covered blocks of their subgroups. Moreover, they described explicitly the correspondence of left finite semibricks induced by the correspondence of support $\tau$-tilting modules. In this paper, we first refine the reduction theorem for support $\tau$-tilting modules in \cite{EJR} and establish the reduction theorem for semibricks. Then applying it to the case of group algebras, we show that the structure of support $\tau$-tilting modules on blocks of finite groups coincides with that on blocks of their quotient groups under suitable conditions, and describe the induced correspondence of semibricks. More precisely, we have the following.\\

\begin{thm}[See Theorem \ref{mainthm}]\label{intro1}
    Let $k$ be an algebraically closed field of positive characteristic $p$ and $G$ a finite group. For each $p$-subgroup $N$ of the center $Z(G)$ of $G$, the following assertions hold.
    \begin{enumerate}
        \setlength{\parskip}{0cm}
        \setlength{\itemsep}{0cm}
        \item There exists a bijection between the set of support $\tau$-tilting $kG$-modules and the set of support $\tau$-tilting $k(G/N)$-modules.
        \item The natural embedding $k(G/N)\textrm{-}\mathrm{mod}\rightarrow kG\textrm{-}\mathrm{mod}$ induces a bijection between the set of left finite semibricks over $k(G/N)$ and the set of left finite semibricks over $kG$.
        \item The bijections of (a) and (b) are compatible with the Asai's bijection between support $\tau$-tilting modules and left finite semibricks.
        \item The natural surjection $kG\rightarrow k(G/N)$ induces a bijection between the set of blocks of $kG$ and the set of blocks of $k(G/N)$. Moreover, the same assertions as in (a), (b) and (c) for the correspondent blocks under this bijection also hold true. \\
    \end{enumerate}
\end{thm}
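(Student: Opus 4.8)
The plan is to realize $k(G/N)$ as a quotient of $kG$ by a two-sided ideal that is simultaneously generated by central elements and nilpotent, so that the general reduction machinery (the refinement of \cite{EJR} for support $\tau$-tilting modules and its semibrick counterpart, established earlier in the paper) applies directly. Concretely, the natural surjection $\pi\colon kG\to k(G/N)$ has kernel $I=kG\cdot\omega(kN)$, where $\omega(kN)$ denotes the augmentation ideal of the subalgebra $kN$. First I would record the two structural properties of $I$. Centrality: since $N\subseteq Z(G)$, every $n\in N$ is central in $kG$, so $I$ is generated by the central elements $\{\,n-1: n\in N\,\}$. Nilpotency: because $N$ is a $p$-group and $\operatorname{char}k=p$, the augmentation ideal $\omega(kN)$ is nilpotent, say $\omega(kN)^m=0$; as $\omega(kN)$ is central this gives $I^m=\omega(kN)^m\,kG=0$, whence in particular $I\subseteq J(kG)$.

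With $I$ central and contained in the radical, parts (a), (b) and (c) follow by applying the general reduction theorems to the quotient $kG\to kG/I=k(G/N)$. Part (a) is the support $\tau$-tilting bijection. Part (b) identifies the left finite semibricks via inflation $k(G/N)\text{-mod}\to kG\text{-mod}$, which is exactly the natural embedding, since a $kG/I$-module is precisely a $kG$-module annihilated by $I$. Part (c) is the compatibility of these two bijections with Asai's correspondence, which is recorded in the general statement.

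For part (d), I would first establish the block correspondence. Since $I\subseteq J(kG)$ is nilpotent, idempotents lift along $\pi$ and primitive central idempotents of $kG$ correspond bijectively to those of $k(G/N)$; hence $\pi$ induces a bijection between the blocks of $kG$ and those of $k(G/N)$, a block $B$ corresponding to $\bar B=\pi(B)=B/(B\cap I)$. It then remains to run the same argument block by block: for each such pair, $B\cap I$ is again a central nilpotent ideal of $B$ with $B/(B\cap I)\cong\bar B$, so the general reduction theorems yield the block-wise analogues of (a), (b) and (c). Their compatibility with the global bijections is immediate, because inflation respects the decomposition of $kG$ into blocks: the central idempotent $e_B$ acts on an inflated module via $\pi(e_B)=\bar e_{\bar B}$, so inflation carries $\bar B$-modules to $B$-modules.

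The main obstacle is not this specialization but the general statement the excerpt promises to prove beforehand: refining the module bijection of \cite{EJR} so that it transports left finite semibricks correctly and intertwines with Asai's bijection. Verifying centrality and nilpotency of $I$, and the block bijection, are routine once the central-nilpotent-ideal framework is in place; the real content lies in controlling how the bricks composing a semibrick behave under the quotient and, crucially, in checking that the ``left finite'' condition is preserved in both directions.
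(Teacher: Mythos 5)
Your overall route is the same as the paper's. Proposition \ref{mainprop} identifies $\mathrm{Ker}\,\pi$ with $\{1-n\mid n\in N\}\cdot kG=J(kN)\cdot kG$, i.e.\ your $I=kG\cdot\omega(kN)$, and the proof of Theorem \ref{mainthm} verifies exactly your two structural properties: $J(kN)\subset Z(kG)$ because $N\leq Z(G)$, and $J(kN)\subset J(kG)$ via the elementwise computation $(g(1-n))^{p^l}=g^{p^l}(1-n^{p^l})=0$ (your ``$\omega(kN)^m=0$'' argument is an equivalent packaging). Parts (a)--(c) then follow, as you say, from the general reduction machinery the paper establishes beforehand: Theorem \ref{reduction} from \cite{EJR} together with Corollary \ref{module} (the explicit map $M\mapsto M/IM$) and Corollary \ref{redsbrick} (the semibrick bijection, preservation of left finiteness, and compatibility with Asai's bijection). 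Your closing assessment that the real content lies in those general statements, not in the group-theoretic specialization, matches the paper's architecture.

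The one genuine gap is in your treatment of (d). You derive the block bijection from ``$I\subseteq J(kG)$ is nilpotent, so idempotents lift,'' but nilpotency of $I$ alone does not yield a bijection on blocks: an idempotent lifting a \emph{central} idempotent of $\Lambda/I$ need not itself be central. Concretely, for $\Lambda$ the upper triangular $2\times 2$ matrices over $k$ and $I=J(\Lambda)$ (nilpotent), $\Lambda/I\cong k\times k$ has two blocks while $\Lambda$ is connected, so the map on blocks is not a bijection. What rescues the statement in the present situation is precisely that $I$ is generated by \emph{central} elements, and this is where the paper's Corollary \ref{block}(a) does real work: writing $I=a\Lambda$ with $a\in Z(\Lambda)\cap J(\Lambda)$, one lifts a central idempotent $\varepsilon$ of $B_i/a_iB_i$ to an idempotent $e\in B_i$ with $(1-e)B_ie\subset a_iB_i$, and then uses centrality of $a_i$ to iterate $(1-e)B_ie\subset a_i(1-e)B_ie\subset\cdots\subset a_i^N(1-e)B_ie=0$, and symmetrically $eB_i(1-e)=0$, forcing $e$ to be central and hence trivial. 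You do invoke central generation a moment later (``$B\cap I$ is again a central nilpotent ideal of $B$''), so the needed ingredient is present in your setup; you simply have to deploy it at the lifting step instead of appealing to nilpotency alone. With that repair, your block-by-block argument and the compatibility of inflation with the central idempotents $\pi(e_B)$ agree with Corollary \ref{block} and parts (d)--(g) of Theorem \ref{mainthm}.
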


\indent As far as the principal blocks are concerned, we can relax the assumption of Theorem \ref{intro1} and find that the central extensions of finite groups do not affect the structure of support $\tau$-tilting modules or left finite semibricks over the principal blocks.\\
 
\begin{thm}[See Theorem \ref{genmainthm}]\label{intro2}
    Let $k$ be an algebraically closed field of positive characteristic and $G$ a finite group. Then the following assertions hold.
    \begin{enumerate}
        \setlength{\parskip}{0cm}
        \setlength{\itemsep}{0cm}
        \item There exists a bijection between the set of support $\tau$-tilting $B_0(kG)$-modules and the set of support $\tau$-tilting $B_0(k(G/Z(G)))$-modules.
        \item The natural embedding $k(G/Z(G))\textrm{-}\mathrm{mod}\rightarrow kG\textrm{-}\mathrm{mod}$ induces a bijection between the set of left finite semibricks over $B_0(k(G/Z(G)))$ and the set of left finite semibricks over $B_0(kG)$.
        \item The bijections of (a) and (b) are compatible with the Asai's bijection between support $\tau$-tilting modules and left finite semibricks.\\
    \end{enumerate}
\end{thm}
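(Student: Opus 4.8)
The plan is to factor the projection $G\twoheadrightarrow G/Z(G)$ according to the Sylow decomposition of the abelian group $Z(G)$, handling the $p$-part with Theorem \ref{mainthm} and the $p'$-part with the classical reduction of principal blocks along normal $p'$-subgroups. Set $P:=O_p(Z(G))$ and $Q:=O_{p'}(Z(G))$; since $Z(G)$ is abelian we have $Z(G)=P\times Q$, and as $P,Q$ are characteristic in $Z(G)\trianglelefteq G$ they are central $p$- and $p'$-subgroups of $G$ respectively. I would factor the projection as $G\twoheadrightarrow G/P\twoheadrightarrow (G/P)/(Z(G)/P)=G/Z(G)$, noting that $Z(G)/P\cong Q$ is a central $p'$-subgroup of $G/P$.

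For the $p$-part I apply Theorem \ref{mainthm} to the central $p$-subgroup $N=P$. Parts (a)--(c) yield compatible bijections of support $\tau$-tilting modules and left finite semibricks between $kG$ and $k(G/P)$, and part (d) descends these to the block level. Because the trivial $kG$-module is inflated from the trivial $k(G/P)$-module, the block bijection of part (d) pairs $B_0(kG)$ with $B_0(k(G/P))$; this furnishes bijections as in (a), (b), (c) between the two principal blocks, realized by the inflation functor $k(G/P)\textrm{-}\mathrm{mod}\to kG\textrm{-}\mathrm{mod}$.

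For the $p'$-part I would invoke the classical reduction of principal blocks along normal $p'$-subgroups. Let $f_0\in k(Z(G)/P)$ be the principal block idempotent; since $Z(G)/P$ is central in $G/P$ and of order invertible in $k$, $f_0$ is a central idempotent of $k(G/P)$ with $k(G/P)f_0\cong k(G/Z(G))$, and the surjection $k(G/P)\to k(G/Z(G))$ restricts to an algebra isomorphism $B_0(k(G/P))\xrightarrow{\sim}B_0(k(G/Z(G)))$ along which inflation is nothing but restriction of scalars. Thus inflation is an equivalence $B_0(k(G/Z(G)))\textrm{-}\mathrm{mod}\xrightarrow{\sim}B_0(k(G/P))\textrm{-}\mathrm{mod}$, hence induces bijections of support $\tau$-tilting modules and of left finite semibricks compatible with Asai's bijection. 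Composing these with the $p$-part bijections --- and noting that the two inflations compose to the natural inflation $k(G/Z(G))\textrm{-}\mathrm{mod}\to kG\textrm{-}\mathrm{mod}$ --- gives assertions (a), (b) and (c).

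The essentially new ingredient beyond Theorem \ref{mainthm} is this $p'$-reduction, and the main point to verify with care is that the algebra isomorphism it provides is genuinely compatible with $\tau$-tilting theory and with Asai's correspondence; this holds because it is an honest isomorphism, so every relevant construction is transported verbatim. This is also exactly what forces the restriction to principal blocks, in contrast to Theorem \ref{mainthm}: the $p'$-reduction canonically matches the principal block of $k(G/P)$ with that of $k(G/Z(G))$, whereas a non-principal block of $k(G/P)$ need not survive inflation along $Z(G)/P$, so no analogue of part (d) is available and the clean correspondence persists only for $B_0$.
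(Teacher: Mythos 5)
Your proposal is correct and takes essentially the same approach as the paper: you factor $G\twoheadrightarrow G/Z(G)$ through $G/N$ with $N$ the $p$-Sylow subgroup of $Z(G)$, apply Theorem \ref{mainthm} to the central $p$-subgroup $N$ (including its block-level statements pairing $B_0(kG)$ with $B_0(k(G/N))$), and then identify $B_0(k(G/N))\cong B_0(k(G/Z(G)))$ along the central $p'$-quotient $Z(G)/N$. Your ``classical $p'$-reduction'' via the averaging idempotent $f_0$ is exactly what the paper proves as Proposition \ref{mainprop'}, with the same central idempotent $\beta=|Z(G)/N|^{-1}\sum_{x\in Z(G)/N}x$ and the same observation that an honest algebra isomorphism transports the $\tau$-tilting and semibrick data verbatim.
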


\noindent\textbf{Notation.} Throughout this paper, $k$ always denotes an algebraically closed field. For a finite dimensional $k$-algebra $\Lambda$, we denote the opposite algebra of $\Lambda$ by $\Lambda^{\mathrm{op}}$, the category of finite dimensional left $\Lambda$-modules by $\Lambda\textrm{-mod}$, the full subcategory of $\Lambda\textrm{-mod}$ consisting of all finite dimensional left projective $\Lambda$-modules by $\Lambda\textrm{-proj}$, the homotopy category of bounded complexes of finite dimensional left projective $\Lambda$-modules by $K^b(\Lambda\textrm{-}\mathrm{proj})$, the derived category of bounded complexes of finite dimensional left $\Lambda$-modules by $D^b(\Lambda\textrm{-mod})$. For $M\in\Lambda\textrm{-mod}$, we denote the number of nonisomorphic indecomposable direct summands of $M$ by $|M|$, the $k$-dual of $M$ by $DM$ and the Auslander-Reiten translate of $M$ by $\tau M$ \cite{ARS}. For a complex $T$, we denote a shifted complex by $n$ degrees of $T$ by $T[n]$.\\

\section{Preliminaries}

In this section, we recall basic materials on group algebras and $\tau$-tilting theory.\\
\indent For a finite dimensional $k$-algebra $\Lambda$, let $\Lambda=B_1\oplus\cdots\oplus B_t$ be the decomposition of the $\Lambda$-bimodule $\Lambda$ into indecomposable $\Lambda$-subbimodules $B_i$. We say that this decomposition is a \textit{block decomposition of $\Lambda$} and each indecomposable direct summand $B_i$ is a \textit{block} of $\Lambda$. Each block is a $k$-algebra and the block decomposition induces the decomposition of the module category $\Lambda\textrm{-mod}\cong B_1\textrm{-mod}\times\cdots\times B_t\textrm{-mod}$.\\
\indent When $\Lambda:=kG$ for finite group $G$, there exists the unique block $B$ which contains the trivial $kG$-module $k_G$ in $B$-mod, that is, $Bk_G\neq0$. We call this block the \textit{principal block} and denote by $B_0(kG)$.\\

\begin{prop}[{\cite[Proposition 1.5.3]{S}}]\label{idem}
    For a finite dimensional $k$-algebra $\Lambda$ and  its block decomposition $\Lambda=B_1\oplus\cdots\oplus B_t$, there exist pairwise orthogonal primitive central idempotents $e_1,\ldots,e_t\in\Lambda$ such that $1=e_1+\cdots+e_t$ and $B_i\cong\Lambda e_i$.\\
\end{prop}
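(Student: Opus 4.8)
The plan is to recognize the block decomposition as a decomposition of $\Lambda$ regarded as a module over its enveloping algebra, and then to transport the resulting complete set of orthogonal primitive idempotents from the endomorphism ring into the center of $\Lambda$. First I would note that a $\Lambda$-bimodule is exactly a left module over the enveloping algebra $\Lambda^{\mathrm e}:=\Lambda\otimes_k\Lambda^{\mathrm{op}}$, which is again a finite-dimensional $k$-algebra, so that $\Lambda\in\Lambda^{\mathrm e}\textrm{-mod}$ and the Krull--Schmidt theorem applies to it. The block decomposition $\Lambda=B_1\oplus\cdots\oplus B_t$ is thus a decomposition of $\Lambda$ into indecomposable $\Lambda^{\mathrm e}$-submodules.

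The key computation is the identification of the endomorphism ring. A $\Lambda$-bimodule endomorphism $f\colon\Lambda\to\Lambda$ satisfies $f(x)=f(x\cdot 1\cdot 1)=xf(1)$ and $f(x)=f(1\cdot 1\cdot x)=f(1)x$ for every $x\in\Lambda$, so $xf(1)=f(1)x$ for all $x$, whence $f(1)\in Z(\Lambda)$; conversely every central element $z$ yields the bimodule endomorphism $x\mapsto xz=zx$. Hence $f\mapsto f(1)$ gives an isomorphism $\mathrm{End}_{\Lambda^{\mathrm e}}(\Lambda)\cong Z(\Lambda)$ of $k$-algebras. Next I would invoke the standard dictionary between direct-sum decompositions of a module and complete sets of orthogonal primitive idempotents of its endomorphism ring: the summands $B_i$ correspond to the projections $\varepsilon_1,\ldots,\varepsilon_t\in\mathrm{End}_{\Lambda^{\mathrm e}}(\Lambda)$, which are pairwise orthogonal, sum to $\mathrm{id}_\Lambda$, and are primitive precisely because each $B_i$ is indecomposable. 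Setting $e_i:=\varepsilon_i(1)$ and transporting along the isomorphism above, the $e_i$ become pairwise orthogonal primitive idempotents of $Z(\Lambda)$ with $e_1+\cdots+e_t=(\varepsilon_1+\cdots+\varepsilon_t)(1)=1$. Since $\varepsilon_i$ is the projection onto $B_i$ and corresponds to multiplication by the central element $e_i$, one obtains $B_i=\varepsilon_i(\Lambda)=\Lambda e_i$.

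The only genuinely delicate point is the passage from ``primitive idempotent of the endomorphism ring'' to ``indecomposable summand,'' which relies on the Krull--Schmidt property and on idempotents lifting correctly; this is automatic here because $\Lambda$ is finite-dimensional over the field $k$, so it is a finite-dimensional module over the finite-dimensional algebra $\Lambda^{\mathrm e}$ and its endomorphism ring is semiperfect. Everything else is the formal correspondence between idempotents and decompositions together with the explicit isomorphism $\mathrm{End}_{\Lambda^{\mathrm e}}(\Lambda)\cong Z(\Lambda)$, which simultaneously guarantees that the idempotents $e_i$ are central.
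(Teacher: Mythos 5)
Your proof is correct, but note that the paper contains no proof of this proposition at all: it is quoted from \cite{S} (Proposition 1.5.3), so the relevant comparison is with the standard argument given there. That argument is direct and more elementary: write $1=e_1+\cdots+e_t$ with $e_i\in B_i$; since each $B_i$ is a sub-bimodule (equivalently, a two-sided ideal), for $x\in\Lambda$ both $x=\sum_i xe_i$ and $x=\sum_i e_ix$ are decompositions with $i$-th component in $B_i$, so comparing components gives $xe_i=e_ix$, i.e.\ centrality; likewise $e_je_i\in B_j\cap B_i$ yields orthogonality and idempotency, $b=be_i$ for $b\in B_i$ gives $B_i=\Lambda e_i$, and primitivity in $Z(\Lambda)$ follows because a splitting $e_i=f+g$ into nonzero orthogonal central idempotents would decompose $B_i=\Lambda f\oplus\Lambda g$ as bimodules, contradicting indecomposability. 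Your route through the enveloping algebra and the isomorphism $\mathrm{End}_{\Lambda^{\mathrm e}}(\Lambda)\cong Z(\Lambda)$, $f\mapsto f(1)$, packages exactly this content into the general dictionary between finite direct-sum decompositions of a module and complete orthogonal systems of idempotents in its endomorphism ring; what it buys is that centrality of the $e_i$ and the correct notion of primitivity (primitivity in $Z(\Lambda)$, not in $\Lambda$ --- an important distinction for blocks) fall out automatically from the identification, at the cost of invoking more machinery than the problem requires. One small correction: the step you flag as delicate is not. The equivalence between indecomposability of $B_i$ and primitivity of the projection $\varepsilon_i$ needs neither Krull--Schmidt, nor lifting of idempotents, nor semiperfectness: an idempotent $e$ of any ring $R$ is primitive if and only if $eRe$ contains only the trivial idempotents, and $\varepsilon_i\,\mathrm{End}_{\Lambda^{\mathrm e}}(\Lambda)\,\varepsilon_i\cong\mathrm{End}_{\Lambda^{\mathrm e}}(B_i)$, whose nontrivial idempotents are precisely the projections onto proper direct summands of $B_i$. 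Krull--Schmidt would only be relevant for uniqueness of the block decomposition, which the proposition does not assert.
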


\indent We say that finite dimensional $k$-algebras $\Lambda$ and $\Gamma$ are \textit{derived equivalent} if their derived categories $D^b(\Lambda\textrm{-mod})$ and $D^b(\Gamma\textrm{-mod})$ are equivalent as triangulated categories.\\

\begin{dfn}
    Let $\Lambda$ be a finite dimensional $k$-algebra. A complex $T\in K^b(\Lambda\textrm{-}\mathrm{proj})$ is \textit{tilting} (resp. \textit{silting}) if $T$ satisfies the following conditions:
    \begin{enumerate}
        \setlength{\parskip}{0cm}
        \setlength{\itemsep}{0cm}
        \item $\mathrm{Hom}_{K^b(\Lambda\textrm{-}\mathrm{proj})}(T,T[i])=0$ for all integers $i\neq 0$ (resp. $i>0$),
        \item The full subcategory $\mathrm{add}\,T$ of $K^b(\Lambda\textrm{-}\mathrm{proj})$ consisting of all complexes isomorphic to direct sums of direct summands of $T$ generates $K^b(\Lambda\textrm{-}\mathrm{proj})$ as a triangulated category.\\
    \end{enumerate}
\end{dfn}

\begin{thm}[{\cite[Theorem 6.4]{R}}]\label{ric}
    Let $\Lambda$ and $\Gamma$ be finite dimensional $k$-algebras. The following conditions are equivalent:
    \begin{enumerate}
        \setlength{\parskip}{0cm}
        \setlength{\itemsep}{0cm}
        \item $\Lambda$ and $\Gamma$ are derived equivalent,
        \item There exists a tilting complex $T\in K^b(\Lambda\textrm{-}\mathrm{proj})$ such that $\Gamma\cong\mathrm{End}_{K^b(\Lambda\textrm{-}\mathrm{proj})}(T)^{\mathrm{op}}$.\\
    \end{enumerate}
\end{thm}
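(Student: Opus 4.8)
The plan is to prove the two implications separately, the substance residing entirely in $\mathrm{(b)}\Rightarrow\mathrm{(a)}$. For $\mathrm{(a)}\Rightarrow\mathrm{(b)}$, I would start from a triangle equivalence $F\colon D^b(\Gamma\textrm{-mod})\xrightarrow{\sim}D^b(\Lambda\textrm{-mod})$ and set $T:=F(\Gamma)$, where $\Gamma$ denotes the regular module placed in degree $0$. Since $K^b(\textrm{-}\mathrm{proj})$ is the intrinsically defined subcategory of $D^b(\textrm{-mod})$ consisting of the perfect (equivalently, homologically finite) objects, it is preserved by any triangle equivalence, so $T\in K^b(\Lambda\textrm{-}\mathrm{proj})$. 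The tilting conditions are then transported across $F$: one has $\mathrm{Hom}(T,T[i])\cong\mathrm{Ext}^i_\Gamma(\Gamma,\Gamma)=0$ for $i\neq0$, the regular module $\Gamma$ generates $K^b(\Gamma\textrm{-}\mathrm{proj})$ as a triangulated category so that $\mathrm{add}\,T$ generates $K^b(\Lambda\textrm{-}\mathrm{proj})$, and $\mathrm{End}(T)^{\mathrm{op}}\cong\mathrm{End}_\Gamma(\Gamma)^{\mathrm{op}}\cong\Gamma$. Thus $T$ is a tilting complex with the required endomorphism algebra.

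The real work is $\mathrm{(b)}\Rightarrow\mathrm{(a)}$, where an actual equivalence must be manufactured from $T$. I would realize $K^b(\Lambda\textrm{-}\mathrm{proj})$ as the subcategory of compact objects inside the unbounded derived category $D(\Lambda)$, so that the generation hypothesis says exactly that $T$ is a compact generator. Fixing a dg enhancement, form the derived endomorphism dg algebra $\mathcal{E}:=\mathrm{REnd}_\Lambda(T)$, whose cohomology is $H^n(\mathcal{E})\cong\mathrm{Hom}_{D(\Lambda)}(T,T[n])$; by the tilting conditions this vanishes for $n\neq0$ and equals $\mathrm{End}(T)=\Gamma^{\mathrm{op}}$ for $n=0$. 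Because its cohomology is concentrated in a single degree, $\mathcal{E}$ is formal: the truncation maps $\mathcal{E}\hookleftarrow\tau_{\leq0}\mathcal{E}\twoheadrightarrow H^0(\mathcal{E})=\Gamma^{\mathrm{op}}$ are quasi-isomorphisms of dg algebras, identifying $\mathcal{E}$ with the ordinary algebra $\Gamma^{\mathrm{op}}$.

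I would then invoke Keller's dg Morita theorem: for the compact generator $T$, the functor $\mathrm{RHom}_\Lambda(T,-)$ is a triangle equivalence $D(\Lambda)\xrightarrow{\sim}D(\mathcal{E})$, and through the formality $\mathcal{E}\simeq\Gamma^{\mathrm{op}}$ this becomes an equivalence $D(\Lambda)\simeq D(\Gamma\textrm{-mod})$ (the left action of $\mathrm{End}_\Lambda(T)^{\mathrm{op}}=\Gamma$ on $\mathrm{Hom}_\Lambda(T,X)$ is precisely what makes the target the derived category of left $\Gamma$-modules, in agreement with the convention of $\mathrm{(b)}$). This unbounded equivalence restricts to the compact objects, giving $K^b(\Lambda\textrm{-}\mathrm{proj})\simeq K^b(\Gamma\textrm{-}\mathrm{proj})$, and since it is implemented by deriving $\mathrm{Hom}$ (respectively tensor) against $T$, which is perfect over both $\Lambda$ and $\Gamma$, it further restricts to $D^b(\Lambda\textrm{-mod})\simeq D^b(\Gamma\textrm{-mod})$, the desired derived equivalence.

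The step I expect to be the main obstacle is producing this honest functor in $\mathrm{(b)}\Rightarrow\mathrm{(a)}$: a priori $T$ carries only a homotopy-level action of its endomorphism ring, and in Rickard's original enhancement-free treatment one must instead build a genuine complex of $\Lambda$-$\Gamma$-bimodules (a two-sided tilting complex) representing $T$, which is the delicate core of the argument. The dg enhancement dissolves exactly this difficulty at the cost of importing Keller's theorem; the remaining care is the formality of $\mathcal{E}$ and the descent from the unbounded equivalence to the bounded categories of finitely generated modules, both of which hinge on $T$ being perfect on each side.
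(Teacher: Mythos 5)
This statement is quoted in the paper from Rickard's work (\cite[Theorem 6.4]{R}) and the paper supplies no proof of it, so there is no internal argument to compare against; your proposal must be judged against the standard literature. On that footing it is a correct outline, and it takes the modern dg-categorical route rather than Rickard's original one. Your $\mathrm{(a)}\Rightarrow\mathrm{(b)}$ is sound: perfect complexes are exactly the homologically finite objects of $D^b(\textrm{-mod})$, hence preserved by any triangle equivalence, and the tilting conditions for $T=F(\Gamma)$ transport because $K^b(\Lambda\textrm{-}\mathrm{proj})\rightarrow D^b(\Lambda\textrm{-mod})$ is fully faithful. Your $\mathrm{(b)}\Rightarrow\mathrm{(a)}$ correctly identifies the crux --- that $T$ a priori carries only a homotopy-level action of $\mathrm{End}(T)$ --- and resolves it by passing to a dg enhancement: $T$ is a compact generator of $D(\Lambda)$ (its thick closure is $K^b(\Lambda\textrm{-}\mathrm{proj})\ni\Lambda$), the derived endomorphism dg algebra $\mathcal{E}$ has cohomology concentrated in degree $0$ (using again full faithfulness of $K^b(\mathrm{proj})$ in $D(\Lambda)$ to compute $H^n(\mathcal{E})$), the truncation zigzag $\tau_{\leq0}\mathcal{E}\rightarrow\mathcal{E}$, $\tau_{\leq0}\mathcal{E}\rightarrow H^0(\mathcal{E})$ gives formality, and Keller's theorem yields $D(\Lambda)\simeq D(\Gamma)$; the module-side conventions you state (precomposition makes $\mathrm{Hom}_\Lambda(T,X)$ a left $\mathrm{End}(T)^{\mathrm{op}}=\Gamma$-module) are right. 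The descent to the bounded level is the one place where your sketch leans on an assertion rather than an argument: that $T$ is perfect over $\Gamma$ is not given but follows since the inverse equivalence preserves compacts, and then $D^b(\textrm{-mod})$ is characterized inside $D(\textrm{-Mod})$ as the objects with finite total cohomology dimension over $k$, which both functors visibly preserve. Compared with Rickard's original, enhancement-free proof --- which constructs the equivalence by hand (and, in the two-sided refinement, builds a genuine complex of bimodules) --- your route is shorter but imports Keller's dg Morita theorem wholesale; that trade-off is legitimate, and for a theorem the paper itself only cites, the sketch is at an appropriate level of detail.
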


\indent We say that a finite dimensional $k$-algebra $\Lambda$ is \textit{symmetric} if $\Lambda$ is isomorphic to $D\Lambda$ as a $\Lambda$-bimodule. Group algebras and their blocks are symmetric. For a finite dimensional symmetric $k$-algebra $\Lambda$, a complex $T\in K^b(\Lambda\textrm{-}\mathrm{proj})$ is tilting if and only if it is silting \cite[Example 2.8]{AI}. We have an operation called a \textit{silting mutation}, which allows us to obtain various silting complexes from one silting complex \cite{AI}. Therefore, it is fundamental to study the structure of silting complexes when studying derived categories over group algebras and their blocks. However, it is difficult to determine the structure of all the silting complexes. Henceforth, we focus on two-term silting complexes, which have better properties.\\
\indent A complex $T=(\cdots\rightarrow T^{-1}\rightarrow T^0 \rightarrow T^1\rightarrow\cdots)\in K^b(\Lambda\textrm{-}\mathrm{proj})$ is \textit{two-term} if $T^i=0$ for all integers $i\neq -1,0$.\\

\begin{dfn}
    Let $\Lambda$ be a finite dimensional $k$-algebra. A module $M\in\Lambda\textrm{-}\mathrm{mod}$ is a \textit{support $\tau$-tilting module} if $M$ satisfies the following conditions:
    \begin{enumerate}
        \setlength{\parskip}{0cm}
        \setlength{\itemsep}{0cm}
        \item $M$ is $\tau$-rigid, that is, $\mathrm{Hom}_\Lambda(M,\tau M)=0$,
        \item There exists $P\in\Lambda\textrm{-}\mathrm{proj}$ such that $\mathrm{Hom}_{\Lambda}(P,M)=0$ and $|P|+|M|=|\Lambda|$.
    \end{enumerate}
    When we specify the projective $\Lambda$-module $P$ in (b), we write a support $\tau$-tilting module $M$ as a pair $(M,P)$.\\
\end{dfn}

\begin{prop}[{\cite[Theorem 2.7]{AIR}}]\label{order1}
    Let $\Lambda$ be a finite dimensional $k$-algebra. The set of isomorphism classes of support $\tau$-tilting $\Lambda$-modules is a partially ordered set with respect to the following relation:
    $$M\geq M'\Leftrightarrow there\;exists\;a\;surjective\;homomorphism\;from\;a\;finite\;direct\;sum\;of\;M\;to\;M'.$$
\end{prop}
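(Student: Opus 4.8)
The plan is to check that $\geq$ satisfies the three axioms of a partial order on the set of isomorphism classes of support $\tau$-tilting modules: reflexivity, transitivity, and antisymmetry. Reflexivity and transitivity are formal, whereas antisymmetry carries all the content and is the only place where $\tau$-rigidity must genuinely be used.

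Reflexivity holds because the identity $M\to M$ is a surjection from a direct sum of one copy of $M$ onto $M$, so $M\geq M$. For transitivity, suppose $M\geq M'$ and $M'\geq M''$, witnessed by surjections $M^{\oplus a}\twoheadrightarrow M'$ and $M'^{\oplus b}\twoheadrightarrow M''$. Taking the $b$-fold direct sum of the first map gives a surjection $M^{\oplus ab}\twoheadrightarrow M'^{\oplus b}$, and composing with the second yields a surjection $M^{\oplus ab}\twoheadrightarrow M''$; hence $M\geq M''$.

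For antisymmetry I would pass to torsion classes. Writing $\mathrm{Fac}\,M$ for the class of factor modules of finite direct sums of $M$, the relation $M\geq M'$ says precisely that $M'\in\mathrm{Fac}\,M$; since $\mathrm{Fac}\,M$ is closed under finite direct sums and quotients, this is equivalent to $\mathrm{Fac}\,M'\subseteq\mathrm{Fac}\,M$. Thus $M\geq M'$ and $M'\geq M$ together force $\mathrm{Fac}\,M=\mathrm{Fac}\,M'$, and the problem reduces to recovering $M$ from the subcategory $\mathrm{Fac}\,M$. Here I would use that, for a $\tau$-rigid module $M$, the subcategory $\mathrm{Fac}\,M$ is a torsion class in which $M$ is $\mathrm{Ext}$-projective: applying the left-exact functor $\mathrm{Hom}_\Lambda(-,\tau M)$ to a surjection $M^{\oplus n}\twoheadrightarrow X$ with $X\in\mathrm{Fac}\,M$ gives $\mathrm{Hom}_\Lambda(X,\tau M)\hookrightarrow\mathrm{Hom}_\Lambda(M,\tau M)^{\oplus n}=0$, and the Auslander--Reiten formula then yields $\mathrm{Ext}^1_\Lambda(M,\mathrm{Fac}\,M)=0$. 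The support $\tau$-tilting condition $|M|+|P|=|\Lambda|$ in turn pins down $\mathrm{add}\,M$ as exactly the subcategory of $\mathrm{Ext}$-projective objects of $\mathrm{Fac}\,M$. Since that subcategory is an invariant of $\mathrm{Fac}\,M$, we conclude $\mathrm{add}\,M=\mathrm{add}\,M'$, whence the basic modules $M$ and $M'$ are isomorphic (and the accompanying projective part $P$, being determined by $M$, agrees as well).

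The main obstacle is precisely this last identification, namely that the $\mathrm{Ext}$-projective objects of $\mathrm{Fac}\,M$ are no larger than $\mathrm{add}\,M$: the inclusion of $\mathrm{add}\,M$ into the $\mathrm{Ext}$-projectives follows softly from $\tau$-rigidity as above, but the reverse inclusion genuinely requires the counting condition $|M|+|P|=|\Lambda|$ together with a functorial-finiteness and maximality argument, and this is the step that separates support $\tau$-tilting modules from arbitrary $\tau$-rigid ones. Everything else in the proof is purely formal.
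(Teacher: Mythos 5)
The paper offers no proof of this proposition at all---it is quoted verbatim from \cite[Theorem 2.7]{AIR}---so the comparison has to be with the standard argument in that source, and your outline is essentially that argument: reflexivity and transitivity are formal, the relation $M\geq M'$ is translated into the inclusion $\mathrm{Fac}\,M'\subseteq\mathrm{Fac}\,M$ of torsion classes, and antisymmetry is obtained by recovering the basic module $M$ from $\mathrm{Fac}\,M$ as its Ext-projective generator. Your soft steps are all correct: for $\tau$-rigid $M$ the class $\mathrm{Fac}\,M$ is a torsion class, and the computation $\mathrm{Hom}_\Lambda(X,\tau M)\hookrightarrow\mathrm{Hom}_\Lambda(M,\tau M)^{\oplus n}=0$ combined with the Auslander--Reiten formula to get $\mathrm{Ext}^1_\Lambda(M,\mathrm{Fac}\,M)=0$ is exactly the Auslander--Smal\o{} argument that the paper itself invokes (via \cite[Proposition 5.8]{AS}) in the proof of Corollary \ref{redsbrick}. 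You were also right to insert ``basic'' at the very end: antisymmetry literally fails for non-basic representatives, since $M$ and $M\oplus M$ satisfy $M\geq M\oplus M\geq M$ without being isomorphic, and both are support $\tau$-tilting under the paper's definition because $|M\oplus M|=|M|$ counts nonisomorphic summands; the proposition must therefore be read as a statement about basic modules, as the paper's later notation $\mathrm{s}\tau\textrm{-}\mathrm{tilt}\,\Lambda$ makes explicit.

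Be clear, however, about what your write-up does and does not establish on its own. The decisive step---that every Ext-projective object of $\mathrm{Fac}\,M$ already lies in $\mathrm{add}\,M$, equivalently that $M\mapsto\mathrm{Fac}\,M$ is injective on basic support $\tau$-tilting modules---is asserted, not proved; it is essentially the content of the bijection of \cite[Theorem 2.7]{AIR} between basic support $\tau$-tilting modules and functorially finite torsion classes, whose proof needs the functorial finiteness of $\mathrm{Fac}\,M$, a Bongartz-type completion, and the maximality consequence of the count $|M|+|P|=|\Lambda|$. You flag this honestly as the main obstacle, so your proposal is an accurate and correctly structured reduction of the proposition to the very theorem being cited, rather than an independent proof of it; since the paper likewise cites rather than proves, that is a defensible stopping point---just do not present the sentence claiming the counting condition ``pins down'' $\mathrm{add}\,M$ as though it followed from the preceding soft arguments.
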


\vspace{3mm}

\begin{prop}[{\cite[Theorem 2.11]{AI}}]\label{order2}
    Let $\Lambda$ be a finite dimensional $k$-algebra. The set of isomorphism classes of silting complexes in $K^b(\Lambda\textrm{-}\mathrm{proj})$ is a partially ordered set with respect to the following relation:
    $$T\geq T'\Leftrightarrow\mathrm{Hom}_{K^b(\Lambda\textrm{-}\mathrm{proj})}(T,T'[i])=0\;for\;all\;integers\;i>0.$$
\end{prop}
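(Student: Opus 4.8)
The plan is to check that the relation $\geq$ satisfies the three axioms of a partial order; throughout, all $\mathrm{Hom}$-spaces are computed in $K^b(\Lambda\textrm{-}\mathrm{proj})$. \emph{Reflexivity} is immediate, since condition (a) in the definition of a silting complex says exactly that $\mathrm{Hom}(T,T[i])=0$ for all $i>0$, i.e.\ $T\geq T$. The substance of the statement is \emph{transitivity} and \emph{antisymmetry}, and for transitivity I would first reformulate the relation in terms of a subcategory attached to each silting complex.

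For a silting complex $S$, put $\mathcal{C}(S):=\{X\in K^b(\Lambda\textrm{-}\mathrm{proj}) : \mathrm{Hom}(S,X[i])=0 \text{ for all } i>0\}$, so that $S\geq X$ is by definition equivalent to $X\in\mathcal{C}(S)$. The key lemma I would prove is that $\mathcal{C}(S)$ equals the smallest full subcategory $\mathcal{S}(S)$ of $K^b(\Lambda\textrm{-}\mathrm{proj})$ containing $\mathrm{add}\,S$ and closed under direct summands, extensions (in the triangulated sense), and the shift $[1]$. The inclusion $\mathcal{S}(S)\subseteq\mathcal{C}(S)$ is formal: $\mathrm{add}\,S\subseteq\mathcal{C}(S)$ by the silting vanishing, while $\mathcal{C}(S)$ is closed under summands, under $[1]$ (passing to $X[1]$ only shrinks the range of nonvanishing), and under extensions (apply the cohomological functor $\mathrm{Hom}(S,-)$ to a triangle and read off the long exact sequence). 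The reverse inclusion $\mathcal{C}(S)\subseteq\mathcal{S}(S)$ is the crux and is where I expect the main obstacle: given $X$ with $\mathrm{Hom}(S,X[i])=0$ for $i>0$, one must exhibit a finite filtration of $X$ with subquotients of the form $S_j[m_j]$, where $S_j\in\mathrm{add}\,S$ and $m_j\geq 0$. I would construct this filtration by repeatedly taking minimal right $\mathrm{add}\,S$-approximations and forming cones, using that $S$ generates $K^b(\Lambda\textrm{-}\mathrm{proj})$ as a triangulated category (condition (b)), that the category is $\mathrm{Hom}$-finite, and that every object is a bounded complex so that only finitely many, and only nonnegative, shifts occur; this amounts to constructing the bounded co-$t$-structure (weight structure) on $K^b(\Lambda\textrm{-}\mathrm{proj})$ whose coheart is $\mathrm{add}\,S$.

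Granting the lemma, \emph{transitivity} is purely formal. If $T\geq T'$ and $T'\geq T''$, then $T'\in\mathcal{C}(T)=\mathcal{S}(T)$, so $\mathrm{add}\,T'\subseteq\mathcal{S}(T)$; as $\mathcal{S}(T)$ is closed under summands, extensions, and $[1]$, the minimality defining $\mathcal{S}(T')$ yields $\mathcal{S}(T')\subseteq\mathcal{S}(T)$. Therefore $T''\in\mathcal{C}(T')=\mathcal{S}(T')\subseteq\mathcal{S}(T)=\mathcal{C}(T)$, i.e.\ $T\geq T''$.

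For \emph{antisymmetry}, suppose $T\geq T'$ and $T'\geq T$. Then $\mathrm{Hom}(T,T'[i])=0$ and $\mathrm{Hom}(T',T[i])=0$ for all $i>0$, and combined with the silting vanishing of $T$ and of $T'$ this shows that $\mathrm{Hom}(T\oplus T',(T\oplus T')[i])=0$ for all $i>0$; that is, $T\oplus T'$ is presilting. Now I would invoke the basic fact that the indecomposable direct summands of a presilting complex are linearly independent in $K_0(K^b(\Lambda\textrm{-}\mathrm{proj}))$, so that their number is at most $\mathrm{rk}\,K_0(K^b(\Lambda\textrm{-}\mathrm{proj}))=|\Lambda|$, with equality precisely for silting complexes. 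Since $T$ already accounts for $|\Lambda|$ nonisomorphic indecomposable summands of $T\oplus T'$, every indecomposable summand of $T'$ must be isomorphic to one of $T$, whence $\mathrm{add}\,T'\subseteq\mathrm{add}\,T$; comparing summand counts ($|T'|=|\Lambda|=|T|$) forces $\mathrm{add}\,T=\mathrm{add}\,T'$, i.e.\ $T\cong T'$. This completes the verification that $\geq$ is a partial order.
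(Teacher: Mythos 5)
The paper offers no proof of this proposition: it is imported verbatim from \cite[Theorem 2.11]{AI}, so your attempt has to be measured against the standard argument of Aihara--Iyama. Your reflexivity is correct, and your plan for transitivity is essentially theirs: the heart of the matter is indeed that a silting complex $T'$ with $T\geq T'$ admits a finite filtration by objects of $\mathrm{add}\,T[m]$ with $m\geq 0$. Two remarks on your execution. First, for transitivity you do not need the full equality $\mathcal{C}(S)=\mathcal{S}(S)$ (a genuine theorem --- the bounded co-$t$-structure you mention); the weaker statement suffices, and it has a short proof that avoids your termination worries: since $S$ generates, every $X$ lies in $\mathrm{smd}(\mathrm{add}\,S[a]*\cdots*\mathrm{add}\,S[b])$ for some finite window $a\leq b$, where $\mathcal{X}*\mathcal{Y}$ denotes middle terms of triangles with outer terms in $\mathcal{X}$, $\mathcal{Y}$; if moreover $\mathrm{Hom}(S,X[i])=0$ for $i>0$ and $a<0$, then in a triangle $S_a[a]\xrightarrow{f}X\oplus W\xrightarrow{g}Y$ the composite of $f$ with the retraction onto $X$ lies in $\mathrm{Hom}(S_a,X[-a])=0$, so the retraction factors through $g$ and $X$ is a summand of $Y$; downward induction on the window length strips all negative shifts. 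Second, your proposed construction via minimal right $\mathrm{add}\,S$-approximations has a real wrinkle you should be aware of: $\mathrm{Hom}(S,S[i])$ need \emph{not} vanish for $i<0$, so the obvious ``width'' invariant of the cone does not visibly decrease along your tower, and termination --- the very point you flag --- is exactly where that scheme gets stuck; the finite-window splitting argument above is how one actually gets around it.

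The genuine gap is in antisymmetry. The ``basic fact'' you invoke --- that the indecomposable summands of an arbitrary presilting complex have linearly independent classes in $K_0$, with equality of counts characterizing silting --- is not basic and cannot be cited at this level of generality. It is known for \emph{two-term} presilting complexes via $g$-vectors (in the setting of \cite{AIR}), and it would follow from the ability to complete any presilting complex to a silting one; but that completion problem is a well-known open question raised by Aihara and Iyama, and no independent proof of the independence claim for silting complexes of arbitrary length is available. (The paper only ever uses the proposition through two-term silting complexes, where your $K_0$ argument would be fine, but the statement as given concerns all silting complexes.) Fortunately your situation is much more special than a general presilting complex, and the machinery you already set up repairs the proof: from $T\geq T'$ you get $T'\in\mathrm{smd}(\mathrm{add}\,T*\mathrm{add}\,T[1]*\cdots*\mathrm{add}\,T[\ell])$; if $\ell>0$, take the triangle $Z\xrightarrow{u}X\xrightarrow{v}T_\ell[\ell]$ with $T'$ a summand of $X$ via a section $s$ and retraction $p$; then $vs\in\mathrm{Hom}(T',T[\ell])=0$ by $T'\geq T$, so $s$ factors through $u$, say $s=us'$, and $pus'=\mathrm{id}$ shows $T'$ is a summand of $Z$; induction on $\ell$ yields $T'\in\mathrm{add}\,T$. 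By symmetry $T\in\mathrm{add}\,T'$, hence $\mathrm{add}\,T=\mathrm{add}\,T'$, and since both complexes are basic, $T\cong T'$. This splitting argument is precisely how Aihara--Iyama prove antisymmetry, and with it your write-up becomes a complete proof.
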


\vspace{3mm}

\indent We denote by $\mathrm{s}\tau\textrm{-}\mathrm{tilt}\,\Lambda$ the set of isomorphism classes of basic support $\tau$-tilting $\Lambda$-modules and by $2\textrm{-}\mathrm{silt}\,\Lambda$ the set of isomorphism classes of basic two-term silting complexes in $K^b(\Lambda\textrm{-}\mathrm{proj})$.\\

\begin{thm}[{\cite[Theorem 3.2]{AIR}}]\label{air}
    Let $\Lambda$ be a finite dimensional $k$-algebra. There exists an isomorphism as partially ordered sets
    $$
    \begin{tikzcd}[row sep = 1mm]
        \mathrm{s}\tau\textrm{-}\mathrm{tilt}\,\Lambda \rar[leftrightarrow, "\sim"] \dar[phantom, "\rotatebox{90}{$\in$}"] & 2\textrm{-}\mathrm{silt}\,\Lambda, \dar[phantom, "\rotatebox{90}{$\in$}"]\\
        (M,P) \rar[mapsto] & (P^M_1\oplus P\xrightarrow{(f\;0)}P^M_0), \\
        \mathrm{Cok}\,g \rar[mapsfrom] & (P^{-1}\xrightarrow{g}P^0), 
    \end{tikzcd}
    $$
    where $P^M_1\xrightarrow{f}P^M_0\rightarrow M\rightarrow0$ is the minimal projective presentation of $M$.\\
\end{thm}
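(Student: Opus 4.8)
The plan is to reduce everything to a single homological dictionary that converts the $\tau$-rigidity conditions on modules into $\mathrm{Hom}$-vanishing conditions on two-term complexes, and then to read off well-definedness, bijectivity and order compatibility from it. For $M,N\in\Lambda\textrm{-mod}$ with minimal projective presentations $P^M_1\xrightarrow{f}P^M_0\to M\to0$ and $P^N_1\to P^N_0\to N\to0$, write $\mathbf{P}(M)=(P^M_1\xrightarrow{f}P^M_0)$ and $\mathbf{P}(N)$ for the associated two-term complexes in degrees $-1,0$. The central step is a natural isomorphism
$$\mathrm{Hom}_{K^b(\Lambda\textrm{-}\mathrm{proj})}(\mathbf{P}(M),\mathbf{P}(N)[1])\cong\mathrm{Hom}_\Lambda(N,\tau M).$$
I would prove this by computing the left-hand side as $\mathrm{Hom}_\Lambda(P^M_1,P^N_0)$ modulo the maps factoring through $f$ or through the differential of $\mathbf{P}(N)$, and then identifying the quotient with $\mathrm{Hom}_\Lambda(N,\tau M)$ through the Nakayama functor $\nu=D\mathrm{Hom}_\Lambda(-,\Lambda)$ and the defining left exact sequence $0\to\tau M\to\nu P^M_1\xrightarrow{\nu f}\nu P^M_0$ of the Auslander--Reiten translate. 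In particular $\mathbf{P}(M)$ is presilting, $\mathrm{Hom}_{K^b}(\mathbf{P}(M),\mathbf{P}(M)[1])=0$, if and only if $M$ is $\tau$-rigid.

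Second, I would verify well-definedness of the forward map and that the two assignments are mutually inverse. Given a support $\tau$-tilting pair $(M,P)$, the complex $T=(P^M_1\oplus P\xrightarrow{(f\;0)}P^M_0)$ splits as $\mathbf{P}(M)\oplus P[1]$, and $\mathrm{Hom}_{K^b}(T,T[1])$ decomposes into four blocks: $\mathrm{Hom}_{K^b}(\mathbf{P}(M),\mathbf{P}(M)[1])\cong\mathrm{Hom}_\Lambda(M,\tau M)$ vanishes by $\tau$-rigidity; $\mathrm{Hom}_{K^b}(P[1],\mathbf{P}(M)[1])\cong\mathrm{Hom}_\Lambda(P,M)$ vanishes by condition (b); and the remaining two blocks vanish for degree reasons. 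Thus $T$ is two-term presilting, and since $|T|=|M|+|P|=|\Lambda|$, a two-term presilting complex with $|\Lambda|$ indecomposable summands is automatically silting, so $T\in2\textrm{-}\mathrm{silt}\,\Lambda$. Conversely, given a two-term silting complex $(P^{-1}\xrightarrow{g}P^0)$, I would delete its contractible summands so that $g$ is a radical map, set $M=\mathrm{Cok}\,g=H^0$, and, after an automorphism of $P^{-1}$, split off a projective summand $P\subseteq P^{-1}$ on which $g$ vanishes; since the image of $g$ equals $\Omega M$ and $P^0$ is forced to be the projective cover $P^M_0$, uniqueness of the minimal projective presentation gives $P^{-1}=P^M_1\oplus P$, so the two constructions undo one another. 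That $M$ is then $\tau$-rigid and that $|M|+|P|=|\Lambda|$ follow from the silting hypotheses via the isomorphism of the first step.

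Finally, for the order isomorphism I would invoke Propositions \ref{order1} and \ref{order2}. As all complexes in sight are two-term, $T'[i]$ lies outside degrees $-1,0$ for $i\ge2$, so the silting order $T\ge T'$ reduces to $\mathrm{Hom}_{K^b}(T,T'[1])=0$. I would then identify both orders with containment of the torsion classes $\mathrm{Fac}\,M\subseteq\Lambda\textrm{-mod}$: the module order $M\ge M'$ is precisely $\mathrm{Fac}\,M\supseteq\mathrm{Fac}\,M'$ for support $\tau$-tilting modules, while the isomorphism of the first step translates the vanishing $\mathrm{Hom}_{K^b}(T,T'[1])=0$ into the same containment of the associated functorially finite torsion classes. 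The main obstacle is the homological isomorphism of the first step: its naturality, the correct handling of the stable $\mathrm{Hom}$ and the Nakayama functor when the modules carry projective summands, and the compatibility of the four $\mathrm{Hom}$-blocks with the $\tau$-rigid and support conditions are what underlie every subsequent reduction. Once it is secured, the well-definedness, bijectivity, and order compatibility are bookkeeping around minimal projective presentations and the presilting-to-silting completion.
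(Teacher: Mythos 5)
The paper offers no proof of this theorem---it is quoted directly from \cite[Theorem 3.2]{AIR}---so the meaningful comparison is with the original Adachi--Iyama--Reiten argument, and your sketch follows that argument essentially step for step: your homological dictionary is their Lemma 3.4 (proved, as you propose, via the Nakayama functor and the sequence $0\to\tau M\to\nu P^M_1\xrightarrow{\nu f}\nu P^M_0$), and the four-block computation of $\mathrm{Hom}_{K^b}(T,T[1])$, the reduction of a two-term silting complex to a radical-differential representative with a split-off stalk summand $P[1]$, and the translation of both partial orders into containment of the torsion classes $\mathrm{Fac}\,M$ are all theirs. Two small points: the natural isomorphism in your first step should read $\mathrm{Hom}_{K^b(\Lambda\textrm{-}\mathrm{proj})}(\mathbf{P}(M),\mathbf{P}(N)[1])\cong D\mathrm{Hom}_\Lambda(N,\tau M)$, with the $k$-dual appearing because $\mathrm{Hom}_\Lambda(N,\nu P)\cong D\mathrm{Hom}_\Lambda(P,N)$---harmless here, since you only ever use vanishing---and the fact that a two-term presilting complex with $|\Lambda|$ indecomposable summands is automatically silting is itself a substantive result of \cite{AIR} (resting on their Bongartz-type completion of $\tau$-rigid pairs), so you are importing a key ingredient rather than proving it; with that caveat, your proposal is a correct sketch along the same route as the cited source.
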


\indent We recall the definition of functorially finite torsion classes and left finite semibricks. We call a subcategory $\mathcal{C}$ in $\Lambda\textrm{-}\mathrm{mod}$ a \textit{torsion class} if $\mathcal{C}$ is closed under taking factor modules and extensions. We say that a subcategory $\mathcal{C}$ in $\Lambda\textrm{-}\mathrm{mod}$ is \textit{functorially finite} if for any $M\in\Lambda\textrm{-}\mathrm{mod}$, there exist $X,\,Y\in\mathcal{C}$ and $f:M\rightarrow X,\,g:Y\rightarrow M\in\Lambda\textrm{-}\mathrm{mod}$ such that $-\circ f:\mathrm{Hom}_\Lambda(X,\mathcal{C})\rightarrow\mathrm{Hom}_\Lambda(M,\mathcal{C})$ and $g\circ -:\mathrm{Hom}_\Lambda(\mathcal{C},Y)\rightarrow\mathrm{Hom}_\Lambda(\mathcal{C},M)$ are both surjective.\\

\begin{dfn}
    Let $\Lambda$ be a finite dimensional $k$-algebra and $M\in\Lambda\textrm{-}\mathrm{mod}$. 
    \begin{enumerate}
        \setlength{\parskip}{0cm}
        \setlength{\itemsep}{0cm}
        \item We call $S$ a \textit{brick} if $\mathrm{End}_\Lambda(S)\cong k$.
        \item We call $S$ a \textit{semibrick} if $S$ is a direct sum of Hom-orthogonal bricks, that is, there exists a decomposition $S=S_1\oplus\cdots\oplus S_t$ into bricks such that $\mathrm{Hom}_\Lambda(S_i,S_j)=0$ for $i\neq j$.
        \item We say that a semibrick $S$ is \textit{left finite} if the smallest torsion class containing $S$ is functorially finite.
        \item We denote the set of isomorphism classes of (resp. left finite) semibricks over $\Lambda$ by $\mathrm{sbrick}\,\Lambda$ (resp. $\mathrm{f_L\textrm{-}sbrick}\,\Lambda$).\\
    \end{enumerate}
\end{dfn}

\begin{thm}[{\cite[Theorem 2.3]{A}}]\label{sbrick}
    There exists a bijection
    \begin{equation}
        \mathrm{s}\tau\textrm{-}\mathrm{tilt}\,\Lambda\xrightarrow{\sim}\mathrm{f_L\textrm{-}sbrick}\,\Lambda, \label{eqsbrick}
    \end{equation}
     which sends $M\in\mathrm{s}\tau\textrm{-}\mathrm{tilt}\,\Lambda$ to $M/\sum_{f\in\mathrm{Rad}_\Lambda(M,M)}\mathrm{Im}\,f\in\mathrm{f_L\textrm{-}sbrick}\,\Lambda$.\\
\end{thm}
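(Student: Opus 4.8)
The plan is to factor the asserted bijection through the set $\mathrm{f\textrm{-}tors}\,\Lambda$ of functorially finite torsion classes of $\Lambda\textrm{-}\mathrm{mod}$. By Adachi--Iyama--Reiten \cite{AIR}, sending a support $\tau$-tilting module $M$ to $\mathrm{Fac}\,M$, the full subcategory of factor modules of finite direct sums of copies of $M$, defines a bijection $\mathrm{s}\tau\textrm{-}\mathrm{tilt}\,\Lambda\xrightarrow{\sim}\mathrm{f\textrm{-}tors}\,\Lambda$, and $\mathrm{Fac}\,M$ is the smallest torsion class containing $M$. It therefore suffices to produce a bijection between $\mathrm{f\textrm{-}tors}\,\Lambda$ and $\mathrm{f_L\textrm{-}sbrick}\,\Lambda$ and to verify that, composed with the map $M\mapsto\mathrm{Fac}\,M$, it realizes the explicit formula $M\mapsto M/\sum_{f\in\mathrm{Rad}_\Lambda(M,M)}\mathrm{Im}\,f$.

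For the second bijection I would first invoke Ringel's correspondence between semibricks and wide subcategories, i.e.\ exact abelian subcategories of $\Lambda\textrm{-}\mathrm{mod}$ closed under extensions. A semibrick $S$ generates, by closing under extensions, a wide subcategory $\mathrm{Filt}\,S$ whose simple objects are exactly the bricks of $S$; conversely a wide subcategory is recovered from the semibrick formed by its simple objects. This reduces the task to matching functorially finite torsion classes with those wide subcategories that correspond to left finite semibricks.

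To a torsion class $\mathcal{T}$ I would attach, following Ingalls--Thomas and Marks--\v{S}\v{t}ov\'{\i}\v{c}ek, the subcategory $W(\mathcal{T})=\{X\in\mathcal{T}:\ker g\in\mathcal{T}\text{ for every }g\colon Y\to X\text{ with }Y\in\mathcal{T}\}$, and check that it is wide. When $\mathcal{T}=\mathrm{Fac}\,M$ is functorially finite I would show that $\mathrm{simp}\,W(\mathcal{T})$ is a left finite semibrick whose generated torsion class is again $\mathcal{T}$, and conversely that every left finite semibrick $S$ arises this way, with the smallest torsion class $\mathrm{T}(S)$ containing $S$ being functorially finite; verifying that $\mathcal{T}\mapsto\mathrm{simp}\,W(\mathcal{T})$ and $S\mapsto\mathrm{T}(S)$ are mutually inverse on the relevant subsets is where the torsion-theoretic bookkeeping is concentrated.

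It remains to match the abstract semibrick $\mathrm{simp}\,W(\mathrm{Fac}\,M)$ with the quotient $M/\sum_{f}\mathrm{Im}\,f$. Writing $M=\bigoplus_i M_i$ into indecomposables, the submodule $\sum_{f\in\mathrm{Rad}_\Lambda(M,M)}\mathrm{Im}\,f$ cuts out exactly the parts that fail to survive as simple objects in $W(\mathrm{Fac}\,M)$, so the quotient retains one copy of each brick simple in $W(\mathrm{Fac}\,M)$; the $\tau$-rigidity of $M$ guarantees that the surviving indecomposable quotients are bricks and are Hom-orthogonal. The main obstacle I anticipate is precisely this final identification together with the functorial-finiteness half of the correspondence: proving that $W(\mathcal{T})$ is wide and that left finiteness of $\mathrm{simp}\,W(\mathcal{T})$ is equivalent to functorial finiteness of $\mathcal{T}$ demands the most care, whereas the passage through torsion classes and Ringel's lemma is comparatively formal.
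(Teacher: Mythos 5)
This theorem is not proved in the paper at all: it is quoted verbatim from Asai \cite{A} (Theorem 2.3), so there is no internal argument to compare your attempt against, and the relevant benchmark is Asai's own proof. Your architecture --- composing the Adachi--Iyama--Reiten bijection $M\mapsto\mathrm{Fac}\,M$ between $\mathrm{s}\tau\textrm{-}\mathrm{tilt}\,\Lambda$ and functorially finite torsion classes with the Ringel correspondence between semibricks and wide subcategories, mediated by the Ingalls--Thomas/Marks--\v{S}\v{t}ov\'{\i}\v{c}ek operator $W(-)$ --- is the standard route and is essentially how the result is established in the literature. The ingredients you invoke are correct as stated: $\mathrm{Fac}\,M$ is a torsion class for $\tau$-rigid $M$ (closure under extensions uses $\tau$-rigidity, via \cite{AS}), $W(\mathcal{T})$ is wide, and half of your claimed equivalence costs nothing, since by this paper's definition a semibrick $S$ is left finite precisely when the smallest torsion class $\mathrm{T}(S)$ containing it is functorially finite.

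That said, as written your proposal is an itinerary rather than a proof: the two verifications you explicitly defer are the entire mathematical content of the theorem. Concretely, you must show (i) for $M\in\mathrm{s}\tau\textrm{-}\mathrm{tilt}\,\Lambda$ the quotient $M/\sum_{f\in\mathrm{Rad}_\Lambda(M,M)}\mathrm{Im}\,f$ is a semibrick whose indecomposable summands are exactly the simple objects of $W(\mathrm{Fac}\,M)$, one for each indecomposable summand of $M$, and that it generates $\mathrm{Fac}\,M$ as a torsion class --- this is where $\mathrm{Hom}_\Lambda(M,\tau M)=0$ does real work, for instance through $\mathrm{Ext}^1_\Lambda(M,\mathrm{Fac}\,M)=0$; and (ii) for a left finite semibrick $S$, that $\mathrm{Filt}\,S=W(\mathrm{T}(S))$, so that your two assignments are mutually inverse. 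Point (ii) cannot be waved through as ``torsion-theoretic bookkeeping'': the composite $\mathcal{T}\mapsto\mathrm{T}(\mathrm{simp}\,W(\mathcal{T}))$ genuinely fails to recover $\mathcal{T}$ for arbitrary torsion classes (only torsion classes generated by their associated wide subcategory are recovered), so the restriction to the functorially finite side must enter the argument in an essential way, and establishing it amounts to reproving the core lemmas of \cite{A}. Your plan is the right one, but with (i) and (ii) open it should be assessed as a correct roadmap with the decisive steps missing.
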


\section{Main results}
For a finite dimensional $k$-algebra $\Lambda$, we denote by $Z(\Lambda)$ the center of $\Lambda$ and by $J(\Lambda)$ the Jacobson radical of $\Lambda$. In \cite{EJR}, Eisele, Janssens and Raedschelders showed the following reduction theorem for support $\tau$-tilting modules by giving an isomorphism in terms of two-term silting complexes.\\

\begin{thm}[{\cite[Theorem 11]{EJR}}]\label{reduction}
    Let $\Lambda$ be a finite dimensional $k$-algebra. For any ideal $I\subset (Z(\Lambda)\cap J(\Lambda))\cdot\Lambda$ of $\Lambda$, we have an isomorphism
    \begin{equation}
        \mathrm{s}\tau\textrm{-}\mathrm{tilt}\,\Lambda\cong\mathrm{s}\tau\textrm{-}\mathrm{tilt}\,\Lambda/I \label{eqreduction}
    \end{equation}
    as partially ordered sets.\\
\end{thm}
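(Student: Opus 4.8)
The plan is to transport the problem to two-term silting complexes. By Theorem \ref{air} we have poset isomorphisms $\mathrm{s}\tau\text{-}\mathrm{tilt}\,\Lambda\cong 2\text{-}\mathrm{silt}\,\Lambda$ and $\mathrm{s}\tau\text{-}\mathrm{tilt}\,(\Lambda/I)\cong 2\text{-}\mathrm{silt}\,(\Lambda/I)$, so it suffices to produce an isomorphism of posets $2\text{-}\mathrm{silt}\,\Lambda\cong 2\text{-}\mathrm{silt}\,\bar\Lambda$, where $\bar\Lambda:=\Lambda/I$. Write $\mathfrak z:=Z(\Lambda)\cap J(\Lambda)$, so that $I\subseteq\mathfrak z\cdot\Lambda$. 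Since every element of $\mathfrak z$ is central and nilpotent, $I\subseteq J(\Lambda)$; hence $\Lambda$ and $\bar\Lambda$ have the same semisimple quotient, reduction modulo $I$ gives a bijection between the indecomposable projectives (by lifting idempotents along the surjection $\Lambda\to\bar\Lambda$, whose kernel $I$ is nilpotent), and in particular $|\Lambda|=|\bar\Lambda|$. Let $F:=\bar\Lambda\otimes_\Lambda-\colon K^b(\Lambda\text{-}\mathrm{proj})\to K^b(\bar\Lambda\text{-}\mathrm{proj})$ be the reduction functor, and write $\bar T:=FT$; it carries two-term complexes to two-term complexes and induces an identification $K_0(\Lambda\text{-}\mathrm{proj})\cong K_0(\bar\Lambda\text{-}\mathrm{proj})$ compatible with $g$-vectors.

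The heart of the argument is a comparison of morphism spaces. For two-term complexes $P_\bullet=(P^{-1}\xrightarrow{f}P^0)$ and $Q_\bullet=(Q^{-1}\xrightarrow{g}Q^0)$ of projectives, the group $\mathrm{Hom}_{K^b(\Lambda\text{-}\mathrm{proj})}(P_\bullet,Q_\bullet[1])$ is the cokernel of the $Z(\Lambda)$-linear map
\[
\alpha\colon\mathrm{Hom}_\Lambda(P^0,Q^0)\oplus\mathrm{Hom}_\Lambda(P^{-1},Q^{-1})\longrightarrow\mathrm{Hom}_\Lambda(P^{-1},Q^0),\qquad(\sigma,\tau)\mapsto\sigma f-g\tau,
\]
and likewise over $\bar\Lambda$. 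I would first observe that for a projective $P$ the natural map $\mathrm{Hom}_\Lambda(P,Q)\to\mathrm{Hom}_{\bar\Lambda}(\bar P,\bar Q)$ is surjective with kernel $\mathrm{Hom}_\Lambda(P,IQ)$, and that, because $I$ is generated by central elements, $\mathrm{Hom}_\Lambda(P,IQ)\subseteq\mathrm{Hom}_\Lambda(P,\mathfrak z Q)=\mathfrak z\cdot\mathrm{Hom}_\Lambda(P,Q)\subseteq J(Z(\Lambda))\cdot\mathrm{Hom}_\Lambda(P,Q)$; here the identity $\mathrm{Hom}_\Lambda(\Lambda e,\mathfrak z Q)=e\mathfrak z Q=\mathfrak z eQ$ uses centrality decisively. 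Since $\bar\alpha$ is the reduction of $\alpha$, surjectivity of $\alpha$ forces that of $\bar\alpha$; conversely, surjectivity of $\bar\alpha$ gives $B=\mathrm{im}\,\alpha+J(Z(\Lambda))\cdot B$ for the finitely generated $Z(\Lambda)$-module $B:=\mathrm{Hom}_\Lambda(P^{-1},Q^0)$, whence $B=\mathrm{im}\,\alpha$ by Nakayama's lemma over $Z(\Lambda)$. Thus
\[
\mathrm{Hom}_{K^b(\Lambda\text{-}\mathrm{proj})}(P_\bullet,Q_\bullet[1])=0\iff\mathrm{Hom}_{K^b(\bar\Lambda\text{-}\mathrm{proj})}(\bar P_\bullet,\bar Q_\bullet[1])=0,
\]
so $F$ both preserves and reflects degree-one Hom-vanishing.

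With this key lemma in hand the remaining steps are formal. For two-term $T,T'$ one has $\mathrm{Hom}_{K^b(\Lambda\text{-}\mathrm{proj})}(T,T'[i])=0$ automatically for all $i\ge 2$, so by Proposition \ref{order2} the silting order is detected by the vanishing of $\mathrm{Hom}(T,T'[1])$ alone; hence $F$ preserves and reflects the order, and in particular $T$ is presilting if and only if $\bar T$ is. Because order-reflection yields injectivity on isomorphism classes (if $\bar T\cong\bar T'$ then $T\ge T'\ge T$, so $T\cong T'$), $F$ is injective. For surjectivity, given $\bar S\in 2\text{-}\mathrm{silt}\,\bar\Lambda$ I would lift its terms along the projective bijection and lift its differential using the surjectivity of $\mathrm{Hom}_\Lambda(P^{-1},P^0)\to\mathrm{Hom}_{\bar\Lambda}(\bar P^{-1},\bar P^0)$, obtaining a two-term $T$ with $FT=\bar S$; the key lemma shows $T$ is presilting, and since the $g$-vectors of its indecomposable summands are linearly independent and, via $K_0(\Lambda\text{-}\mathrm{proj})\cong K_0(\bar\Lambda\text{-}\mathrm{proj})$, generate $K_0(\Lambda\text{-}\mathrm{proj})$ (being transported from a basis for $\bar S$), they form a basis, so $T$ is silting \cite{AIR}. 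Therefore $F$ is an isomorphism of posets, and composing with Theorem \ref{air} yields \eqref{eqreduction}.

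The main obstacle is the key lemma, specifically the direction reflecting Hom-vanishing from $\bar\Lambda$ back to $\Lambda$. Preserving vanishing is automatic, but reflecting it is exactly where the hypothesis $I\subseteq(Z(\Lambda)\cap J(\Lambda))\cdot\Lambda$ is indispensable: centrality guarantees that the kernel of the reduction on each Hom-space lies in $J(Z(\Lambda))\cdot\mathrm{Hom}_\Lambda(-,-)$, which is precisely what makes the Nakayama argument over the center applicable. Without centrality this kernel need not be a radical submodule over any ring controlling the cokernel, and the reflection can fail.
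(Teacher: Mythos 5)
Your proposal is correct, and it takes essentially the same route as the proof the paper relies on: the paper states Theorem \ref{reduction} without proof as a citation of \cite[Theorem 11]{EJR}, and that proof likewise transports the problem through Theorem \ref{air} to two-term silting complexes and shows that the reduction functor $\Lambda/I\otimes_\Lambda-$ preserves and reflects the vanishing of $\mathrm{Hom}_{K^b}(T,T'[1])$ via a Nakayama-type argument over $Z(\Lambda)$ that uses the centrality of the generators of $I$, exactly as in your key lemma. Your lifting-and-counting argument for surjectivity also matches the cited approach, so there is nothing substantive to add.
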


We give the explicit correspondence for support $\tau$-tilting modules in \eqref{eqreduction} to formulate the reduction theorem for left finite semibricks and examine the correspondence at the level of blocks.\\

\begin{cor}\label{module}
    In the setting of Theorem $\ref{reduction}$, the bijection sends $M\in\mathrm{s}\tau\textrm{-}\mathrm{tilt}\,\Lambda$ to $M/IM\in\mathrm{s}\tau\textrm{-}\mathrm{tilt}\,\Lambda/I$.
\end{cor}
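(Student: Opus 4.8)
The plan is to unwind the isomorphism \eqref{eqreduction} of Theorem \ref{reduction} through the Adachi--Iyama--Reiten bijection of Theorem \ref{air} and to trace where a support $\tau$-tilting module $M$ goes. Writing $\bar\Lambda:=\Lambda/I$ and recalling that the isomorphism of \cite{EJR} is realized at the level of two-term silting complexes by the reduction functor $\bar\Lambda\otimes_\Lambda-\colon K^b(\Lambda\textrm{-}\mathrm{proj})\to K^b(\bar\Lambda\textrm{-}\mathrm{proj})$, which sends a projective $P$ to $P/IP$ and restricts to a bijection on two-term silting complexes precisely because $I\subseteq J(\Lambda)$ (so idempotents lift), the whole argument reduces to a single cokernel computation.

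First I would take $M\in\mathrm{s}\tau\textrm{-}\mathrm{tilt}\,\Lambda$ with minimal projective presentation $P_1^M\xrightarrow{f}P_0^M\to M\to 0$ and projective complement $P$. By Theorem \ref{air}, the pair $(M,P)$ corresponds to the two-term silting complex $T_M=(P_1^M\oplus P\xrightarrow{(f\ 0)}P_0^M)$. Applying the reduction functor gives
$$\bar\Lambda\otimes_\Lambda T_M=\bigl(\,\overline{P_1^M}\oplus\overline{P}\xrightarrow{(\bar f\ 0)}\overline{P_0^M}\,\bigr)\in 2\textrm{-}\mathrm{silt}\,\bar\Lambda,$$
where a bar denotes $\bar\Lambda\otimes_\Lambda-$. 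By the inverse direction of Theorem \ref{air} applied over $\bar\Lambda$, this complex corresponds to the module $\mathrm{Cok}(\bar f\ 0)=\mathrm{Cok}\,\bar f$, the $\overline{P}$-summand contributing nothing since the map out of it is zero.

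It then remains to identify $\mathrm{Cok}\,\bar f$. Since $\bar\Lambda\otimes_\Lambda-$ is right exact, applying it to $P_1^M\xrightarrow{f}P_0^M\to M\to 0$ yields the exact sequence $\overline{P_1^M}\xrightarrow{\bar f}\overline{P_0^M}\to\bar\Lambda\otimes_\Lambda M\to 0$, whence $\mathrm{Cok}\,\bar f\cong\bar\Lambda\otimes_\Lambda M\cong M/IM$. This is exactly the asserted image, and no minimality of $\bar f$ is needed for the identification.

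The computation itself is routine; the only point demanding care is the first one, namely confirming that the isomorphism of Theorem \ref{reduction} is given on two-term silting complexes by $\bar\Lambda\otimes_\Lambda-$ in the normalization stated above, compatibly with the indexing of indecomposable projectives on both sides. I expect this bookkeeping against \cite{EJR} to be the main thing to pin down; once it is in place, the passage through Theorem \ref{air} and the right-exactness identification $\mathrm{Cok}\,\bar f\cong M/IM$ are immediate.
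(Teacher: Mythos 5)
Your proposal is correct and follows essentially the same route as the paper: both unwind the Eisele--Janssens--Raedschelders bijection, realized on two-term silting complexes by $\Lambda/I\otimes_\Lambda-$, through the Adachi--Iyama--Reiten correspondence and identify the image module as $\mathrm{Cok}\,\bar f\cong M/IM$ by right-exactness. The only (harmless) divergence is that the paper invokes the fact from the proof of \cite[Theorem 11]{EJR} that the reduction functor preserves \emph{minimal} projective presentations of support $\tau$-tilting modules, whereas you correctly observe that minimality is not needed since the inverse map in Theorem \ref{air} is simply the cokernel.
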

\begin{proof}
    As in the proof in \cite[Theorem 11]{EJR}, the functor $\widetilde{(-)}:=\Lambda/I\otimes_\Lambda -:\Lambda\textrm{-mod}\rightarrow(\Lambda/I)\textrm{-mod}$ preserves minimal projective presentations of support $\tau$-tilting modules. More precisely, if $M\in\mathrm{s}\tau\textrm{-}\mathrm{tilt}\,\Lambda$ corresponds to $M'\in\mathrm{s}\tau\textrm{-}\mathrm{tilt}(\Lambda/I)$ under the bijection and the exact sequence $P_1\xrightarrow{f} P_0\rightarrow M\rightarrow 0$ is the minimal projective presentation of $M$, then the exact sequence $\widetilde{P_1}\xrightarrow{\widetilde{f}} \widetilde{P_0}\rightarrow M'\rightarrow 0$ is also the minimal projective presentation of $M'$. Since the functor $\widetilde{(-)}$ is right exact, 
    $$M'\cong \mathrm{Cok}\,\widetilde{f} \cong\widetilde{\mathrm{Cok}\,f}\cong\widetilde{M}\cong M/IM.$$
\end{proof}

\vspace{3mm}

\begin{cor}\label{redsbrick}
    In the setting of Theorem \ref{reduction}, the natural embedding $\Lambda/I\textrm{-}\mathrm{mod}\rightarrow\Lambda\textrm{-}\mathrm{mod}$ induces a bijection
    \begin{equation}
        \mathrm{sbrick}\,\Lambda/I\xrightarrow{\sim}\mathrm{sbrick}\,\Lambda. \label{eqredsbrick}
    \end{equation}
    Moreover, this bijection preserves left finiteness and the following diagram commutes:
    $$
    \begin{tikzcd}
        \mathrm{s}\tau\textrm{-}\mathrm{tilt}\,\Lambda \ar[r,"\eqref{eqreduction}"] \ar[d,"\eqref{eqsbrick}"] & \mathrm{s}\tau\textrm{-}\mathrm{tilt}\,\Lambda/I, \ar[d,"\eqref{eqsbrick}"] \\
        \mathrm{f_L}\textrm{-}\mathrm{sbrick}\,\Lambda & \mathrm{f_L}\textrm{-}\mathrm{sbrick}\,\Lambda/I \ar[l,"\eqref{eqredsbrick}"].
    \end{tikzcd}
    $$
\end{cor}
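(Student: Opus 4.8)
The plan is to establish \eqref{eqredsbrick} first, and then to deduce the commutativity of the square together with the preservation of left finiteness from a single identification of Asai semibricks.

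For \eqref{eqredsbrick}: the embedding $\iota\colon\Lambda/I\textrm{-}\mathrm{mod}\to\Lambda\textrm{-}\mathrm{mod}$ is exact and fully faithful, and $\mathrm{Hom}_\Lambda(X,Y)=\mathrm{Hom}_{\Lambda/I}(X,Y)$ for $\Lambda/I$-modules $X,Y$, so $\iota$ sends bricks to bricks and preserves Hom-orthogonality; hence it maps $\mathrm{sbrick}\,\Lambda/I$ injectively into $\mathrm{sbrick}\,\Lambda$. For surjectivity I would show every brick $S\in\mathrm{sbrick}\,\Lambda$ is annihilated by $I$: for $z\in Z(\Lambda)\cap J(\Lambda)$ the multiplication $z\cdot(-)$ is a $\Lambda$-endomorphism of $S$, hence a scalar because $\mathrm{End}_\Lambda(S)=k$, and this scalar is nilpotent, so $0$; as $I\subseteq(Z(\Lambda)\cap J(\Lambda))\cdot\Lambda$, it follows that $IS=0$ and $S$ is a $\Lambda/I$-module.

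By Corollary \ref{module} the top map \eqref{eqreduction} is $M\mapsto\bar M:=M/IM$, so everything reduces to showing that for each $M\in\mathrm{s}\tau\textrm{-}\mathrm{tilt}\,\Lambda$ the semibrick $M/U$ attached to $M$ by \eqref{eqsbrick} equals $\iota(\bar M/\bar U)$ as a $\Lambda$-module, where $U=\sum_{f\in\mathrm{Rad}_\Lambda(M,M)}\mathrm{Im}\,f$ and $\bar U=\sum_{g\in\mathrm{Rad}_{\Lambda/I}(\bar M,\bar M)}\mathrm{Im}\,g$. Indeed, once this holds the square commutes on the nose, and since the two instances of \eqref{eqsbrick} and the map \eqref{eqreduction} are bijections, $\iota$ must carry $\mathrm{f_L\textrm{-}sbrick}\,\Lambda/I$ bijectively onto $\mathrm{f_L\textrm{-}sbrick}\,\Lambda$, which is the asserted preservation of left finiteness. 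Two inclusions are immediate. First, each $z\in Z(\Lambda)\cap J(\Lambda)$ acts on $M$ as a nilpotent central, hence radical, endomorphism, so $(Z(\Lambda)\cap J(\Lambda))M\subseteq U$ and a fortiori $IM\subseteq U$; thus $M/U=\bar M/q(U)$ for the projection $q\colon M\to\bar M$. Second, writing $M=\bigoplus_i M_i^{a_i}$ with $M_i$ indecomposable, Theorem \ref{reduction} and Corollary \ref{module} give $\bar M=\bigoplus_i\widetilde{M_i}^{\,a_i}$ with the $\widetilde{M_i}$ indecomposable and pairwise non-isomorphic, and a block-matrix argument shows that the algebra homomorphism $\phi\colon\mathrm{End}_\Lambda(M)\to\mathrm{End}_{\Lambda/I}(\bar M)$ induced by $\widetilde{(-)}$ satisfies $\phi(\mathrm{Rad}_\Lambda(M,M))\subseteq\mathrm{Rad}_{\Lambda/I}(\bar M,\bar M)$; since $q(U)=\sum_f\mathrm{Im}(\phi(f))$, this gives $q(U)\subseteq\bar U$ and hence a natural $\Lambda$-epimorphism $M/U=\bar M/q(U)\twoheadrightarrow\bar M/\bar U$.

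The main obstacle is the reverse inclusion $\bar U\subseteq q(U)$, that is, that this epimorphism is an isomorphism. One cannot argue by lifting endomorphisms, since $\phi$ is typically not surjective: passing to $\Lambda/I$ creates new radical homomorphisms between the summands of $\bar M$ (the cokernel of $\phi$ is governed by $\mathrm{Ext}^1_\Lambda(M,IM)$, which need not vanish). Instead I would pass to torsion classes and invoke Asai's refinement of \eqref{eqsbrick} from \cite{A}, under which $M/U$ is the semibrick determined by the functorially finite torsion class $\mathrm{Gen}\,M$, the assignment sending a semibrick to the smallest torsion class containing it being injective. Using $\mathrm{Gen}_{\Lambda/I}\bar M=\mathrm{Gen}_\Lambda M\cap(\Lambda/I\textrm{-}\mathrm{mod})$, the crux becomes the identity that $\mathrm{Gen}_\Lambda M$ is the smallest $\Lambda$-torsion class containing $\iota(\bar M/\bar U)$, i.e. that $\mathrm{Gen}\,M$ is generated over $\Lambda$ by its $I$-annihilated part; I expect to obtain this from the torsion-class correspondence underlying Theorem \ref{reduction}, concretely by filtering each object of $\mathrm{Gen}\,M$ along the $I$-adic filtration $M\supseteq IM\supseteq I^2M\supseteq\cdots$, whose successive subquotients are $I$-annihilated and again lie in $\mathrm{Gen}\,M$. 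Granting this, $M/U$ and $\iota(\bar M/\bar U)$ generate the same torsion class $\mathrm{Gen}\,M$, so by injectivity they coincide, which finishes the argument.
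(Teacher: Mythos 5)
Your first half coincides with the paper's: the embedding is injective on semibricks, and every brick over $\Lambda$ is annihilated by $I$ because each $z\in Z(\Lambda)\cap J(\Lambda)$ acts as a nilpotent scalar. For the square, however, you take a genuinely different route, and you do so for a mistaken reason: the lifting argument you dismiss is exactly the paper's proof, and your claimed obstacle is illusory in this setting. Writing $I=a_1\Lambda+\cdots+a_t\Lambda$ with $a_i\in Z(\Lambda)\cap J(\Lambda)$, the central multiplications assemble to a surjection $M^{\oplus t}\twoheadrightarrow IM$, so $IM\in\mathrm{Fac}\,M$; since $M$ is $\tau$-rigid, $\mathrm{Ext}^1_\Lambda(M,\mathrm{Fac}\,M)=0$ by Auslander--Smal\o\ \cite[Proposition 5.8]{AS}, hence $\mathrm{Ext}^1_\Lambda(M,IM)=0$ and $\varphi\colon\mathrm{End}_\Lambda(M)\to\mathrm{End}_{\Lambda/I}(M/IM)$ \emph{is} surjective for every support $\tau$-tilting $M$ (it can fail only for modules that are not $\tau$-rigid). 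The paper then observes that $\mathrm{Ker}\,\varphi$ is nilpotent, so $\varphi(J(\Gamma))=J(\Theta)$, and that the nilpotent endomorphisms $a_i\cdot(-)$ lie in $J(\Gamma)$, giving $IM\subseteq\sum_{f\in J(\Gamma)}\mathrm{Im}\,f$; a direct computation then identifies the two radical traces and yields $M/U\cong\iota(\overline{M}/\overline{U})$ on the nose, with no torsion classes at all. Your detour is nonetheless valid: $\mathrm{Gen}_{\Lambda/I}\overline{M}=\mathrm{Gen}_\Lambda M\cap(\Lambda/I\textrm{-}\mathrm{mod})$ holds because an $I$-annihilated quotient of $M^{\oplus n}$ factors through $\overline{M}^{\oplus n}$; in the filtration step each $I^jX$ does lie in $\mathrm{Gen}\,M$, being a finite sum of images of central multiplications and hence of factor modules of $X$; the containment of the smallest $\Lambda/I$-torsion class containing $\overline{S}$ in the smallest $\Lambda$-torsion class follows since the latter intersected with $\Lambda/I\textrm{-}\mathrm{mod}$ is a torsion class there (extensions of $I$-annihilated modules taken in $\Lambda/I\textrm{-}\mathrm{mod}$ are extensions in $\Lambda\textrm{-}\mathrm{mod}$); and extension-closure absorbs the filtration. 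One refinement: since your identity exhibits the torsion class of $\iota(\overline{S})$ as the functorially finite class $\mathrm{Gen}\,M$, the semibrick $\iota(\overline{S})$ is automatically left finite, so you only need injectivity of the semibrick-to-torsion-class assignment on \emph{left finite} semibricks, where it is one direction of Asai's bijection with functorially finite torsion classes \cite{A} --- safer than asserting injectivity on all semibricks. In exchange for length, your argument is more categorical and avoids endomorphism-ring computations; the paper's is shorter, self-contained, and produces the stronger structural fact ($\varphi$ surjective with nilpotent kernel) as a by-product.
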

\begin{proof}
    The natural embedding $\Lambda/I\textrm{-}\mathrm{mod}\rightarrow\Lambda\textrm{-}\mathrm{mod}$ induces an injection $\mathrm{sbrick}\,\Lambda/I\hookrightarrow\mathrm{sbrick}\,\Lambda$. Hence, for the first assertion, it suffices to show that $IS=0$ for any $S\in\mathrm{sbrick}\,\Lambda$. Take any $x\in I$. We can define a homomorphism $f\in\mathrm{End}_\Lambda(S)$ by $f(s):=xs\;(s\in S)$ because $x\in I\subset Z(\Lambda)$. Since $x\in I\subset J(\Lambda)$, $x$ is nilpotent, so is $f$. However, since $\mathrm{End}_\Lambda(S)$ is isomorphic to a direct product of copies of $k$ as a $k$-algebra, we have $f=0$. Thus, it follows that $xS=0$ for every $x\in I$, which complete the proof of the first assertion.\\
    \indent We can pick $a_1,\ldots,a_t\in Z(\Lambda)\cap J(\Lambda)$ such that $I=a_1\Lambda+\cdots+a_t\Lambda$. Take arbitrary $M\in\mathrm{s}\tau\textrm{-}\mathrm{tilt}\,\Lambda$ and denote the natural surjection $M\rightarrow M/IM$ by $\overline{(-)}$. Let $\Gamma:=\mathrm{End}_{\Lambda}(M)$ and $\Theta:=\mathrm{End}_{\Lambda/I}(M/IM)\cong\mathrm{End}_{\Lambda}(M/IM)$. For any $f\in\Gamma$, we have $f(IM)=If(M)\subset IM$. Hence, we can define a $k$-algebra homomorphism $\varphi:\Gamma\rightarrow\Theta$ by $\varphi(f)(\overline{m}):=\overline{f(m)}\;(f\in\Gamma,m\in M)$, which induces the following commutative diagram:
    $$
    \begin{tikzcd}
        0 \ar[r] & IM \ar[r] & M \ar[r,"\overline{(-)}"] \ar[d,"f"] & M/IM \ar[r] \ar[d,"\varphi(f)"] & 0, \\
        0 \ar[r] & IM \ar[r] & M \ar[r,"\overline{(-)}"] & M/IM \ar[r] & 0. 
    \end{tikzcd}
    $$
    \indent First, we show that $IM\in\mathrm{Fac}\,M$ and $IM\subset\sum_{f\in J(\Gamma)}\mathrm{Im}\,f$, where $\mathrm{Fac}\,M$ denotes a full subcategory consisting of factor modules of direct sums of copies of $M$. For integers $1\leq i\leq t$, we can define a homomorphism $f_i\in\Gamma$ by $f_i(m):=a_im\;(m\in M)$ since $a_i\in Z(\Lambda)$. Then a homomorphism $(f_i)_{1\leq i\leq t}:M^{\oplus t}\rightarrow IM$ is well-defined and surjective, which implies $IM\in\mathrm{Fac}\,M$. Since $a_i\in J(\Lambda)$, there exists some integer $l>0$ such that $(f_i)^l=0$. Thus, we have $f_i\in J(\Gamma)$ and $IM=\sum_{1\leq i\leq t}f_i(M)\subset \sum_{f\in J(\Gamma)}\mathrm{Im}\,f$.\\
    \indent Next, we show that $\varphi$ is surjective. By applying the functor $\mathrm{Hom}_{\Lambda}(M,-)$ to the short exact sequence
    $$0\rightarrow IM\rightarrow M\xrightarrow{\overline{(-)}} M/IM\rightarrow 0,$$
    we get the exact sequence
    $$\mathrm{Hom}_{\Lambda}(M,M)\rightarrow\mathrm{Hom}_{\Lambda}(M,M/IM)\rightarrow\mathrm{Ext}^1_{\Lambda}(M,IM).$$
    By $\tau$-rigidity of $M$ and \cite[Proposition 5.8]{AS}, it follows that $\mathrm{Ext}^1_{\Lambda}(M,\mathrm{Fac}\,M)=0$. Hence, $\mathrm{Ext}^1_{\Lambda}(M,IM)=0$ and the map $\mathrm{Hom}_{\Lambda}(M,M)\rightarrow\mathrm{Hom}_{\Lambda}(M,M/IM)$ is surjective. Therefore, $\varphi$ is also surjective by the commutative diagram above.\\
    \indent Since $\mathrm{Ker}\,\varphi=\{f\in \Gamma\mid f(M)\subset IM\}$ and $I$ is nilpotent, $\mathrm{Ker}\,\varphi$ is nilpotent, which implies $\mathrm{Ker}\,\varphi\subset J(\Gamma)$. Hence, we have $ J(\Gamma)/\mathrm{Ker}\,\varphi=J(\Gamma/\mathrm{Ker}\,\varphi)\cong J(\Theta)$ and $\varphi(J(\Gamma))=J(\Theta)$. By considering the following commutative diagram for any $m\in M$;
    $$
    \begin{tikzcd}
        J(\Gamma) \ar[r,"\mathrm{ev}_{m}"] \ar[d, two heads, "\varphi"] & M, \ar[d, two heads, "\overline{(-)}"] \\
        J(\Theta) \ar[r,"\mathrm{ev}_{\overline{m}}"] & M/IM,
    \end{tikzcd}
    $$
    where $\mathrm{ev}_m(f):=f(m)\;(f\in\Gamma)$ and $\mathrm{ev}_{\overline{m}}(f'):=f'(\overline{m})\;(f'\in\Theta)$, it follows that
    $$\overline{\sum_{f\in J(\Gamma)}\mathrm{Im}\,f}=\sum_{f'\in J(\Theta)} \mathrm{Im}\,f',$$
    which implies
    $$\frac{M/IM}{\sum_{f'\in J(\Theta)} \mathrm{Im}\,f'}=\frac{M/IM}{(\sum_{f\in J(\Gamma)}\mathrm{Im}\,f)/IM}\cong \frac{M}{\sum_{f\in J(\Gamma)}\mathrm{Im}\,f}$$
    as $\Lambda$-modules. Therefore, the second assertion holds because the following diagram commutes:
    $$
    \begin{tikzcd}
        M \dar[mapsto] \rar[phantom, near end,"\in"] & \mathrm{s}\tau\textrm{-}\mathrm{tilt}\,\Lambda \dar["\eqref{eqsbrick}"] \rar["\eqref{eqreduction}"] & \mathrm{s}\tau\textrm{-}\mathrm{tilt}\,\Lambda/I \dar["\eqref{eqsbrick}"] \rar[phantom, near start, "\ni"] & M/IM, \dar[mapsto] \\
        M/\sum_{f\in J(\Gamma)}\mathrm{Im}\,f \rar[phantom, "\in"] & \mathrm{f_L\textrm{-}sbrick}\,\Lambda & \mathrm{f_L\textrm{-}sbrick}\,\Lambda/I \lar["\eqref{eqredsbrick}"] \rar[phantom, "\ni"] & (M/IM)/\sum_{f'\in J(\Theta)} \mathrm{Im}\,f', \\[-5mm]
         & \Lambda\textrm{-}\mathrm{mod} \uar[phantom,"\cap"] & \Lambda/I\textrm{-}\mathrm{mod}. \uar[phantom,"\cap"] \lar[hook'] & 
    \end{tikzcd}
    $$
\end{proof}

\vspace{3mm}

\begin{cor}\label{block}
    In the setting of Theorem $\ref{reduction}$, the following assertions hold:
    \begin{enumerate}
        \setlength{\parskip}{0cm}
        \setlength{\itemsep}{0cm}
        \item The natural surjection $\Lambda\rightarrow\Lambda/I$ induces a bijection between the set of blocks of $\Lambda$ and that of $\Lambda/I$.
        \item If a block $B$ of $\Lambda$ corresponds to a block $B'$ of $\Lambda/I$ under the bijection in (a), then the isomorphism in Theorem $\ref{reduction}$ induces an isomorphism as partially ordered sets
        \begin{equation}
            \mathrm{s}\tau\textrm{-}\mathrm{tilt}\,B\cong\mathrm{s}\tau\textrm{-}\mathrm{tilt}\,B'. \label{eqblockred}
        \end{equation}
        \item If a block $B$ of $\Lambda$ corresponds to a block $B'$ of $\Lambda/I$ under the bijection in (a), then the natural embedding $\Lambda/I\textrm{-}\mathrm{mod}\rightarrow\Lambda\textrm{-}\mathrm{mod}$ induces a bijection
        \begin{equation}
            \mathrm{sbrick}\,B'\xrightarrow{\sim}\mathrm{sbrick}\,B. \label{eqblockredsbrick}
        \end{equation}
        \item If a block $B$ of $\Lambda$ corresponds to a block $B'$ of $\Lambda/I$ under the bijection in (a), then the bijection in (c) preserves left finiteness and the following diagram commutes:
        $$
        \begin{tikzcd}
            \mathrm{s}\tau\textrm{-}\mathrm{tilt}\,B \ar[r,"\eqref{eqblockred}"] \ar[d,"\eqref{eqsbrick}"] & \mathrm{s}\tau\textrm{-}\mathrm{tilt}\,B', \ar[d,"\eqref{eqsbrick}"] \\
            \mathrm{f_L}\textrm{-}\mathrm{sbrick}\,B & \mathrm{f_L}\textrm{-}\mathrm{sbrick}\,B' \ar[l,"\eqref{eqblockredsbrick}"].
        \end{tikzcd}
        $$
    \end{enumerate}
\end{cor}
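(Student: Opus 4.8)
The plan is to reduce everything to the block-idempotent structure, using the fact that $I \subset (Z(\Lambda)\cap J(\Lambda))\cdot\Lambda$ is a central nilpotent ideal. For part (a), I would start from the block decomposition $\Lambda = B_1\oplus\cdots\oplus B_t$ with central primitive idempotents $e_1,\ldots,e_t$ given by Proposition \ref{idem}. Writing $\bar{(-)}$ for the image under $\Lambda\to\Lambda/I$, the key observation is that $\bar e_1,\ldots,\bar e_t$ are orthogonal central idempotents of $\Lambda/I$ summing to $1$, so they induce a block decomposition of $\Lambda/I$ that refines no further than the $\bar e_i$. The main point to verify is that each $\bar e_i$ remains \emph{primitive} as a central idempotent, i.e. that no $\bar e_i$ splits into a sum of two nonzero orthogonal central idempotents of $\Lambda/I$. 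This is exactly lifting of idempotents modulo a nilpotent ideal: since $I$ is nilpotent and central, idempotents of $Z(\Lambda/I) = Z(\Lambda)/(I\cap Z(\Lambda))$ lift to $Z(\Lambda)$, so a splitting of $\bar e_i$ would lift to a splitting of $e_i$, contradicting primitivity. This gives the bijection $B_i \leftrightarrow \bar B_i := B_i/IB_i = (\Lambda/I)\bar e_i$, where I use that $IB_i = I e_i \Lambda = e_i I \Lambda \subset I$ so that $B_i/IB_i \cong (\Lambda/I)\bar e_i$.

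For parts (b), (c) and (d), the strategy is that everything is \emph{compatible with the block decomposition}, so I can simply restrict the global statements of Theorem \ref{reduction} and Corollaries \ref{module}, \ref{redsbrick} to a single block factor. Concretely, the module-category decomposition $\Lambda\textrm{-}\mathrm{mod} \cong \prod_i B_i\textrm{-}\mathrm{mod}$ and $\Lambda/I\textrm{-}\mathrm{mod} \cong \prod_i \bar B_i\textrm{-}\mathrm{mod}$ induces product decompositions $\mathrm{s}\tau\textrm{-}\mathrm{tilt}\,\Lambda \cong \prod_i \mathrm{s}\tau\textrm{-}\mathrm{tilt}\,B_i$ (as posets) and likewise for semibricks. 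I would verify that the global bijection $M\mapsto M/IM$ of Corollary \ref{module} respects these factors: if $M = \bigoplus_i M_i$ with $M_i = e_i M \in B_i\textrm{-}\mathrm{mod}$, then $M/IM = \bigoplus_i M_i/IM_i$ and $M_i/IM_i = M_i/(IB_i)M_i \in \bar B_i\textrm{-}\mathrm{mod}$, using again the centrality of $I$. Once the bijection of Theorem \ref{reduction} is identified as a product of maps $\mathrm{s}\tau\textrm{-}\mathrm{tilt}\,B_i \to \mathrm{s}\tau\textrm{-}\mathrm{tilt}\,\bar B_i$, restricting to the factor indexed by the chosen block gives \eqref{eqblockred}, and this restriction is still a poset isomorphism because the order on a product poset is componentwise (and the $\geq$ relation of Proposition \ref{order1} is detected blockwise, since surjections decompose over the block summands).

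Parts (c) and (d) then follow by the identical restriction argument applied to Corollary \ref{redsbrick}: the semibrick bijection \eqref{eqredsbrick} is induced by the natural embedding, which is the product over blocks of the embeddings $\bar B_i\textrm{-}\mathrm{mod}\hookrightarrow B_i\textrm{-}\mathrm{mod}$, so it restricts to \eqref{eqblockredsbrick}; a brick lies in a single block, so bricks and semibrick-orthogonality are visibly blockwise. Left finiteness is preserved under the restriction because the smallest torsion class containing a semibrick $S$ concentrated in block $B$ is itself concentrated in $B$, and functorial finiteness is checked blockwise in the product category. Finally, the commuting square of (d) is obtained by restricting the commuting square of Corollary \ref{redsbrick} to the factor indexed by $B$, since Asai's bijection \eqref{eqsbrick} of Theorem \ref{sbrick} is defined by the blockwise-compatible formula $M\mapsto M/\sum_{f\in\mathrm{Rad}_\Lambda(M,M)}\mathrm{Im}\,f$ and hence also decomposes as a product over blocks. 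I expect the only genuine obstacle to be part (a) — specifically the primitivity of $\bar e_i$ — since (b)--(d) are formal once the block bijection and the blockwise compatibility of all the constructions are in place.
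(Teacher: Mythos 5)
Your plan for (b)--(d) is sound and essentially matches the paper's route: after reducing to a principal ideal $I=a\Lambda$ with $a\in Z(\Lambda)\cap J(\Lambda)$, the paper writes $a=(a_1,\ldots,a_t)$ along the block decomposition, notes $a_i\in Z(B_i)\cap J(B_i)$, and simply applies Theorem \ref{reduction} and Corollary \ref{redsbrick} to each block $B_i$ with the ideal $a_iB_i$; your ``restrict the product decomposition'' formulation is the same argument. The genuine gap is in (a), at exactly the point you flagged as the only obstacle. Your primitivity argument rests on the identification $Z(\Lambda/I)=Z(\Lambda)/(I\cap Z(\Lambda))$, and this is \emph{false} in general under the corollary's hypotheses: central elements of $\Lambda/I$ need not lift to central elements of $\Lambda$. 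Concretely, take $\mathrm{char}\,k\neq2$ and $\Lambda=k\langle u,v\rangle/(u^2,v^2,uv+vu)$, the exterior algebra on two generators. Then $Z(\Lambda)=k\oplus k\,uv$, the element $a:=uv$ lies in $Z(\Lambda)\cap J(\Lambda)$, and $I:=a\Lambda=k\,uv$ satisfies the hypothesis of Theorem \ref{reduction}; but $\Lambda/I\cong k[u,v]/(u,v)^2$ is commutative, so $Z(\Lambda/I)$ is $3$-dimensional while the image of $Z(\Lambda)$ is only $1$-dimensional. Hence lifting idempotents along the commutative surjection $Z(\Lambda)\rightarrow Z(\Lambda)/(I\cap Z(\Lambda))$ does not reach all central idempotents of $\Lambda/I$, and a hypothetical splitting of $\bar e_i$ by central idempotents of $\Lambda/I$ cannot be lifted by this mechanism. (In this example there happen to be no nontrivial idempotents at all, so the corollary's conclusion survives; it is your argument that breaks, not the statement.)

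The paper closes precisely this gap by a Peirce-component absorption argument that uses both the centrality and the nilpotency of the generator. Given a central idempotent $\varepsilon$ of $B_i/a_iB_i$, it lifts $\varepsilon$ merely to an idempotent $e\in B_i$ (lifting modulo the nil ideal $a_iB_i$, \cite[Proposition 1.5.7]{S}), with no centrality claimed for $e$. Centrality of $\varepsilon$ yields $(1-e)B_ie\subset a_iB_i$, and then, since $a_i$ is central, $(1-e)B_ie=(1-e)^2B_ie^2\subset a_i(1-e)B_ie$; iterating and using $a_i^N=0$ forces $(1-e)B_ie=0$, and symmetrically $eB_i(1-e)=0$. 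Hence $e$ is central in the block $B_i$, so $e\in\{0,1\}$ and $\varepsilon\in\{0,1\}$, i.e.\ $B_i/a_iB_i$ is again a block. This is the ingredient your proposal needs in place of the false center identification; with it (and the reduction to a single generator $a$, or the analogous absorption for several commuting nilpotent generators), the rest of your plan goes through as written.
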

\begin{proof}
    It suffices to show that the assertions hold when $I=a\Lambda$ for any $a\in Z(\Lambda)\cap J(\Lambda)$. Let $\Lambda=B_1\oplus\cdots\oplus B_t$ be the block decomposition of $\Lambda$ and we set $a=(a_1,\cdots,a_t)\in B_1\oplus\cdots\oplus B_t$. Since $Z(\Lambda)=Z(B_1)\oplus\cdots\oplus Z(B_t)$ and $J(\Lambda)=J(B_1)\oplus\cdots\oplus J(B_t)$, we have $a_i\in Z(B_i)\cap J(B_i)$ for all $1\leq i\leq t$. It follows that $\Lambda/a\Lambda=B_1/a_1B_1\oplus\cdots\oplus B_t/a_tB_t$ and $B_i/a_iB_i\neq 0$ for all integers $1\leq i\leq t$.\\
    \indent To prove (a), it suffices to show that $B_i/a_iB_i$ does not have non-trivial central idempotents for all $1\leq i\leq t$. Let $\varepsilon$ be a central idempotent of $B_i/a_iB_i$. Since $a_iB_i\subset J(B_i)$, there exists an idempotent $e\in B_i$ such that $\varepsilon=e+a_iB_i$ \cite[Propositon 1.5.7]{S}. We have $$(1-\varepsilon)(B_i/a_iB_i)\varepsilon=(1-\varepsilon)\varepsilon(B_i/a_iB_i)=0,$$
    so $(1-e)B_ie\subset a_iB_i$ holds. Since $a_i\in Z(B_i)$, we have
    $$(1-e)B_ie=(1-e)^2B_ie^2\subset(1-e)a_iB_ie=a_i(1-e)B_ie.$$
    We have an integer $N$ such that $a_i^N=0$ since $a_i\in J(B_i)$. Thus, we have
    $$(1-e)B_ie\subset a_i(1-e)B_ie\subset a_i^2(1-e)B_ie\subset\cdots\subset a_i^N(1-e)B_ie=0.$$
    It follows that $(1-e)B_ie=0$. In the same way, we also have $eB_i(1-e)=0$. Therefore, we have\\
    $$B_i=eB_ie+(1-e)B_ie+eB_i(1-e)+(1-e)B_i(1-e)=eB_ie+(1-e)B_i(1-e),$$
    hence $e$ is a central idempotent of $B_i$. Since any block has no nontrivial central idempotents, we have $e=0$ or $1$. Thus, it follows that $\varepsilon=0$ or $1$, which shows (a).\\
    \indent (b), (c) and (d) follow from Theorem $\ref{reduction}$ and Corollary \ref{redsbrick} since the block $B_i$ of $\Lambda$ corresponds to the block $B_i/a_iB_i$ of $\Lambda/a\Lambda$ and $a_i\in Z(B_i)\cap J(B_i)$.
\end{proof}

\vspace{3mm}

\indent In the rest of this section, we assume $p:=\mathrm{char}\,k>0$ and let $G$ be a finite group, $Z(G)$ the center of $G$, $1=Z_0\leq Z_1\leq\cdots$ the upper central series of $G$, and $H(G)$ the final stable term of the upper central series of $G$. We prepare the required propositions about group algebras and apply the above reduction theorem to the case of group algebras to obtain our main results.\\

\begin{prop}\label{mainprop}
    For any normal subgroup $N$ of $G$, the kernel of the natural surjection $\pi:kG\rightarrow k(G/N)$ of group algebras is $\{1-n\mid n\in N\}\cdot kG$. Moreover, if $N$ is a $p$-group, then the kernel of $\pi$ is $J(kN)\cdot kG$.
\end{prop}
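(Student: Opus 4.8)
The plan is to prove the first assertion by identifying the quotient $kG/I$, where $I := \{1-n\mid n\in N\}\cdot kG$, with $k(G/N)$ via a dimension count, and then to deduce the second assertion by recognizing $J(kN)$ as the augmentation ideal of $kN$.

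First I would record that $I$ is in fact a two-sided ideal. Although it is defined as a right ideal, normality of $N$ makes the generating set stable under conjugation: for $h\in G$ and $n\in N$, setting $n':=hnh^{-1}\in N$ gives $h(1-n)=(1-n')h$, so $kG\cdot\{1-n\mid n\in N\}\subseteq I$. Next, since $\pi(1-n)=1-\overline{n}=0$ in $k(G/N)$ for every $n\in N$, we obtain $I\subseteq\ker\pi$, so $\pi$ factors through a surjection $\overline{\pi}\colon kG/I\twoheadrightarrow k(G/N)$. To see that this is an isomorphism I would bound $\dim_k(kG/I)$ from above: the identity $g-gn=(1-n')g\in I$ (again with $n'=gng^{-1}$) shows that all group elements lying in a fixed coset $gN$ have the same image in $kG/I$, so the images of a chosen set of coset representatives span $kG/I$, whence $\dim_k(kG/I)\leq |G/N|=\dim_k k(G/N)$. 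The surjection $\overline{\pi}$ then forces equality of dimensions, so $\overline{\pi}$ is an isomorphism and $\ker\pi=I$, which proves the first claim.

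For the second assertion, assume $N$ is a $p$-group. The crucial point is that $J(kN)$ coincides with the augmentation ideal $I(kN):=\ker(kN\to k)$. Since $N$ is a finite $p$-group and $\operatorname{char}k=p$, the trivial module is the unique simple $kN$-module, so $kN$ is local with $J(kN)=I(kN)$; equivalently, $I(kN)$ is nilpotent, hence contained in $J(kN)$, while $kN/I(kN)\cong k$ is semisimple, giving the reverse inclusion. The augmentation ideal is spanned by $\{n-1\mid n\in N\}$, as these elements lie in $I(kN)$ and the $|N|-1$ of them with $n\neq 1$ are linearly independent in the $(|N|-1)$-dimensional space $I(kN)$. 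Consequently $J(kN)\cdot kG=\{n-1\mid n\in N\}\cdot kG=\{1-n\mid n\in N\}\cdot kG$, which equals $\ker\pi$ by the first assertion.

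The routine verifications — the conjugation identity, the dimension inequality, and the linear independence giving the spanning set — are all elementary. The only step demanding genuine care is the identification $J(kN)=I(kN)$, that is, the locality of $kN$ in the modular setting; this is precisely where the hypotheses that $N$ is a $p$-group and that $\operatorname{char}k=p$ are indispensable, since for a general $N$ the augmentation ideal is neither nilpotent nor equal to the radical.
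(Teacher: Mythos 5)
Your proof is correct and follows essentially the same strategy as the paper's: both establish the inclusion $\{1-n\mid n\in N\}\cdot kG\subseteq\ker\pi$ and then close the gap with a coset-based dimension count, the only difference being that you bound $\dim_k(kG/I)$ from above by spanning with coset representatives, while the paper bounds $\dim_k I$ from below via an explicit linearly independent set $\{g_i-ng_i\}$ --- two dual versions of the same count. For the second assertion both arguments reduce to the fact that $\{1-n\mid n\in N-\{1\}\}$ is a basis of $J(kN)$ for a $p$-group $N$ in characteristic $p$, which the paper simply cites from Pierce and you re-derive from the locality of $kN$.
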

\begin{proof}
    Let $\{g_1=1,\ldots,g_m\}$ be the set of representatives of right cosets of $N$ in $G$.\\
    \indent Let $\mathcal{B}:=\{g_i-ng_i\mid 1\leq i\leq m,\,n\in N-\{1\}\}\subset kG$. Every element $g\in G-\{g_1,\ldots,g_m\}$ appears exactly once in the terms of the elements in $\mathcal{B}$ because it can be written uniquely in the form of $g=ng_i\;(1\leq i\leq m,\,n\in N)$. Hence, the set  $\mathcal{B}$ is linearly independent. Since
    $\mathcal{B}\subset\{1-n\mid n\in N\}\cdot kG\subset\mathrm{Ker}\,\pi$,
    we have $|\mathcal{B}|\leq \mathrm{dim}_k\{1-n\mid n\in N\}\cdot kG\leq \mathrm{dim}_k\mathrm{Ker}\,\pi$. However, 
    $$\mathrm{dim}_k\mathrm{Ker}\,\pi=\mathrm{dim}_k kG-\mathrm{dim}_k k(G/N)=m|N|-m=|\mathcal{B}|,$$
    so it follows that $\mathrm{Ker}\,\pi=\{1-n\mid n\in N\}\cdot kG$.\\
    \indent If $N$ is a $p$-group, then the set $\{1-n\mid n\in N-\{1\}\}\subset kN$ is a $k$-basis of $J(kN)$ by \cite[Corollary in the section 4.7]{P}. Therefore, we have $\mathrm{Ker}\,\pi=J(kN)\cdot kG$.
\end{proof}

\vspace{3mm}

\begin{prop}\label{mainprop'}
    For any normal $p'$-subgroup $N$ of $G$, the restriction of the natural surjection $\pi:kG\rightarrow k(G/N)$ to $B_0(kG)$ induces an isomorphism $\pi|_{B_0(kG)}:B_0(kG)\xrightarrow{\sim}B_0(k(G/N))$ of $k$-algebras.
\end{prop}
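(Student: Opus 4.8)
The plan is to exploit the fact that, since $N$ is a $p'$-group, $|N|$ is invertible in $k$, so $e_N:=\frac{1}{|N|}\sum_{n\in N}n$ is a well-defined idempotent of $kG$; because $N\trianglelefteq G$, conjugation by any $g\in G$ permutes $N$, whence $e_N$ is central in $kG$. First I would identify the kernel of $\pi$ with the ideal cut out by $e_N$. By Proposition \ref{mainprop}, $\mathrm{Ker}\,\pi=\{1-n\mid n\in N\}\cdot kG$, and since $\{1-n\mid n\in N\}$ spans the augmentation ideal $I_N$ of the semisimple algebra $kN$, which equals $(1-e_N)kN$, and $kN\cdot kG=kG$ with $e_N$ central, one gets $\mathrm{Ker}\,\pi=(1-e_N)kG$. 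Consequently $\pi$ induces a $k$-algebra isomorphism $e_NkG\xrightarrow{\sim}k(G/N)$, exhibiting $k(G/N)$ as the direct factor $e_NkG$ of $kG$.

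Next I would locate the principal block inside $e_NkG$. The central idempotent $e_N$ is a sum of primitive central (i.e. block) idempotents by Proposition \ref{idem}, so for the principal block idempotent $e_0$ either $e_0e_N=e_0$ or $e_0e_N=0$. To rule out the latter I would evaluate on the trivial module $k_G$: each $n\in N$ acts as the identity on $k_G$, so $e_N$ acts as the identity there, and $e_0$ also acts as the identity on $k_G$ because $k_G$ lies in $B_0(kG)$. If $e_0e_N=0$ held, then $e_0=e_0(1-e_N)\in\mathrm{Ker}\,\pi$ would act as zero on $k_G$, contradicting that it acts as the identity. Hence $e_0e_N=e_0$, i.e. $B_0(kG)=kGe_0\subseteq e_NkG$.

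Then I would transport $B_0(kG)$ across the isomorphism $e_NkG\cong k(G/N)$. Restricting $\pi$ to $B_0(kG)=kGe_0$ yields a $k$-algebra homomorphism onto $k(G/N)\pi(e_0)$, and it is injective because $kGe_0\cap\mathrm{Ker}\,\pi\subseteq e_NkG\cap(1-e_N)kG=0$. It remains to see that $\pi(e_0)$ is the principal block idempotent of $k(G/N)$: under $e_NkG\cong k(G/N)$ the trivial module $k_G$ (on which $N$ acts trivially) corresponds to the trivial $k(G/N)$-module, so the block $B_0(kG)$ containing $k_G$ corresponds to the block of $k(G/N)$ containing the trivial module, namely $B_0(k(G/N))$. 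Therefore $\pi(e_0)$ is the principal block idempotent of $k(G/N)$ and $\pi|_{B_0(kG)}$ is the desired isomorphism.

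The hard part will be the bookkeeping of the middle step: verifying cleanly that the principal block idempotent is absorbed by $e_N$ and that the target block is again principal, since both hinge on tracking the trivial module through the identification $e_NkG\cong k(G/N)$. Once that is in place, the remaining content is just the semisimplicity of $kN$ and the orthogonality of the complementary central idempotents $e_N$ and $1-e_N$.
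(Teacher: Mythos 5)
Your proposal is correct and follows essentially the same route as the paper: both hinge on the central idempotent $e_N=|N|^{-1}\sum_{n\in N}n$ (the paper's $\beta$), use Proposition \ref{mainprop} to control $\mathrm{Ker}\,\pi$, and show $\pi$ restricts to an isomorphism from the direct factor $e_NkG$ onto $k(G/N)$. The only difference is that you spell out the bookkeeping the paper compresses into ``clearly'' --- namely that $e_0e_N=e_0$ via the trivial module and that $\pi(e_0)$ is again the principal block idempotent --- which is a sound (and slightly more complete) rendering of the same argument.
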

\begin{proof}
    Let $\beta:=|N|^{-1}\sum_{n\in N}n\in kG$. Clearly, $\beta$ is a central idempotent of $kG$ and $B_0(kG)$ is contained in $kG\beta$. Hence, it suffices to show that the restriction $\pi|_{kG\beta}:kG\beta\rightarrow k(G/N)$ is an isomorphism.\\
    \indent Since $\pi(\beta)=1$, $\pi|_{kG\beta}$ is surjective. By Proposition \ref{mainprop}, we have
    $$\mathrm{Ker}\,\pi|_{kG\beta}=(\mathrm{Ker}\,\pi)\beta=\{1-n\mid n\in N\}\cdot kG\beta=0,$$
    where the last equation follows because $(1-n)g\beta=(\beta-n\beta)g=0$ for any $n\in N$ and $g\in G$. Thus, $\pi|_{kG\beta}$ is injective.  Therefore, $\pi|_{kG\beta}$ is an isomorphism.
\end{proof}

\vspace{3mm}

\indent We are now ready to state the `quotient reduction' theorem.\\

\begin{thm}\label{mainthm}
    For any $p$-subgroup $N$ of $Z(G)$, the following assertions hold:
    \begin{enumerate}
        \setlength{\parskip}{0cm}
        \setlength{\itemsep}{0cm}
        \item There exists an isomorphism as partially ordered sets
        \begin{equation}
            \mathrm{s}\tau\textrm{-}\mathrm{tilt}\,kG\cong\mathrm{s}\tau\textrm{-}\mathrm{tilt}\,k(G/N), \label{eqgpred1}
        \end{equation}
        which sends $M\in \mathrm{s}\tau\textrm{-}\mathrm{tilt}\,kG$ to $M/J(kN)M\in \mathrm{s}\tau\textrm{-}\mathrm{tilt}\,k(G/N)$.
        \item The natural embedding $k(G/N)\textrm{-}\mathrm{mod}\rightarrow kG\textrm{-}\mathrm{mod}$ induces a bijection
        \begin{equation}
            \mathrm{sbrick}\,k(G/N)\xrightarrow{\sim}\mathrm{sbrick}\,kG. \label{eqgpredsbrick1}
        \end{equation}
        \item The bijection in (b) preserves left finiteness and the following diagram commutes:
        $$
        \begin{tikzcd}
            \mathrm{s}\tau\textrm{-}\mathrm{tilt}\,kG \ar[r,"\eqref{eqgpred1}"] \ar[d,"\eqref{eqsbrick}"] & \mathrm{s}\tau\textrm{-}\mathrm{tilt}\,k(G/N), \ar[d,"\eqref{eqsbrick}"] \\
            \mathrm{f_L}\textrm{-}\mathrm{sbrick}\,kG & \mathrm{f_L}\textrm{-}\mathrm{sbrick}\,k(G/N) \ar[l,"\eqref{eqgpredsbrick1}"].
        \end{tikzcd}
        $$
        \item The natural surjection $kG\rightarrow k(G/N)$ induces a bijection between the set of blocks of $kG$ and that of $k(G/N)$. In particular, this bijection sends $B_0(kG)$ to $B_0(k(G/N))$.
        \item If a block $B$ of $kG$ corresponds to a block $B'$ of $k(G/N)$ under the bijection in (d), then the isomorphism in (a) induces an isomorphism as partially ordered sets
        \begin{equation}
            \mathrm{s}\tau\textrm{-}\mathrm{tilt}\,B\cong\mathrm{s}\tau\textrm{-}\mathrm{tilt}\,B'. \label{eqgpblockred1}
        \end{equation}
        \item If a block $B$ of $kG$ corresponds to a block $B'$ of $k(G/N)$ under the bijection in (d), then the natural embedding $k(G/N)\textrm{-}\mathrm{mod}\rightarrow kG\textrm{-}\mathrm{mod}$ induces a bijection
        \begin{equation}
            \mathrm{sbrick}\,B'\xrightarrow{\sim}\mathrm{sbrick}\,B. \label{eqgpblockredsbrick1}
        \end{equation}
        \item If a block $B$ of $kG$ corresponds to a block $B'$ of $k(G/N)$ under the bijection in (d), then the bijection in (f) preserves left finiteness and the following diagram commutes:
        $$
        \begin{tikzcd}
            \mathrm{s}\tau\textrm{-}\mathrm{tilt}\,B \ar[r,"\eqref{eqgpblockred1}"] \ar[d,"\eqref{eqsbrick}"] & \mathrm{s}\tau\textrm{-}\mathrm{tilt}\,B', \ar[d,"\eqref{eqsbrick}"] \\
            \mathrm{f_L}\textrm{-}\mathrm{sbrick}\,B & \mathrm{f_L}\textrm{-}\mathrm{sbrick}\,B' \ar[l,"\eqref{eqgpblockredsbrick1}"].
        \end{tikzcd}
        $$
    \end{enumerate}
\end{thm}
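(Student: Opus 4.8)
The plan is to realize the entire theorem as an application of the general reduction machinery already established, namely Theorem \ref{reduction} together with Corollaries \ref{module}, \ref{redsbrick} and \ref{block}, specialized to $\Lambda=kG$ and the ideal $I:=J(kN)\cdot kG$. The essential first step is to check that this ideal satisfies the hypothesis of Theorem \ref{reduction}, that is, $I\subset(Z(kG)\cap J(kG))\cdot kG$, which reduces to showing $J(kN)\subset Z(kG)\cap J(kG)$. Centrality comes from $N\subset Z(G)$: every element of $kN$ commutes with each $g\in G$ because each $n\in N$ does, so $kN\subset Z(kG)$ and hence $J(kN)\subset Z(kG)$. For the radical containment I would use that $N$ is a $p$-group, so $kN$ is local with nilpotent radical $J(kN)$; a central nilpotent element $x$ generates a nilpotent two-sided ideal $x\,kG$, since $(x\,kG)^{m}=x^{m}kG$ by centrality, and nilpotent ideals lie in $J(kG)$, whence $J(kN)\subset J(kG)$.

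Once this is in place, Proposition \ref{mainprop} identifies $kG/I=kG/(J(kN)\cdot kG)$ with $k(G/N)$, and all the general results transport through this identification. Concretely, Theorem \ref{reduction} yields the poset isomorphism \eqref{eqgpred1}; Corollary \ref{module} describes the correspondence as $M\mapsto M/IM$, and since $IM=(J(kN)\cdot kG)M=J(kN)M$ for every $kG$-module $M$, this is exactly the map asserted in (a). Statements (b) and (c) are Corollary \ref{redsbrick} read off through $kG/I\cong k(G/N)$, the block bijection in (d) is Corollary \ref{block}(a), and (e), (f), (g) are the block-level assertions of Corollary \ref{block}(b), (c), (d). The only point not already contained in the general corollaries is the final claim of (d), that the block bijection sends $B_0(kG)$ to $B_0(k(G/N))$. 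I would argue via the trivial modules: the embedding $k(G/N)\textrm{-}\mathrm{mod}\rightarrow kG\textrm{-}\mathrm{mod}$ sends the trivial module $k_{G/N}$ to the trivial module $k_G$. The block $B'$ of $k(G/N)$ matched to $B_0(kG)$ is precisely the one whose modules embed into $B_0(kG)\textrm{-}\mathrm{mod}$; since $k_G\in B_0(kG)\textrm{-}\mathrm{mod}$ by definition of the principal block and $k_G$ is the image of $k_{G/N}$, we obtain $k_{G/N}\in B'\textrm{-}\mathrm{mod}$, and as $B_0(k(G/N))$ is the unique block containing $k_{G/N}$ we conclude $B'=B_0(k(G/N))$.

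I expect no serious obstacle here: the real content of the theorem has been front-loaded into the general corollaries, so the group-theoretic input shrinks to the two elementary facts that $N\subset Z(G)$ forces $kN\subset Z(kG)$ and that $N$ being a $p$-group forces $J(kN)$ to be nilpotent. The only places demanding a little care are matching the abstract reduction map $M\mapsto M/IM$ with the stated $M\mapsto M/J(kN)M$, which hinges on the identity $IM=J(kN)M$ noted above, and pinning down the principal block in (d); both are routine once the hypothesis of Theorem \ref{reduction} has been verified.
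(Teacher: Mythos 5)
Your proposal is correct and takes essentially the same approach as the paper: both reduce the whole theorem to the single verification $J(kN)\subset Z(kG)\cap J(kG)$ and then invoke Theorem \ref{reduction}, Corollaries \ref{module}, \ref{redsbrick}, \ref{block} and Proposition \ref{mainprop} with $I=J(kN)\cdot kG$. The only cosmetic differences are that the paper proves $J(kN)\subset J(kG)$ by the Frobenius computation $(g(1-n))^{p^l}=g^{p^l}(1-n^{p^l})=0$ on the basis $\{1-n\mid n\in N-\{1\}\}$ rather than via your (equally valid) appeal to nilpotency of $J(kN)$ and centrality, and that you spell out the principal-block claim in (d) via the trivial module, a point the paper leaves implicit.
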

\begin{proof}
    By Theorem \ref{reduction}, Corollaries \ref{module}, \ref{redsbrick}, \ref{block} and Proposition \ref{mainprop}, it is sufficient to show that $J(kN)\subset Z(kG)\cap J(kG)$. Since $N\subset Z(G)$, we have $J(kN)\subset kN\subset Z(kG)$. By \cite[Corollary in the section 4.7]{P}, the set $\{1-n\mid n\in N-\{1\}\}$ is a $k$-basis of $J(kN)$. For any $g\in G$ and $n\in N-\{1\}$, there exists some integer $l$ such that $n^{p^l}=1$, so we have
    $$(g(1-n))^{p^l}=g^{p^l}(1-n)^{p^l}=g^{p^l}(1-n^{p^l})=0$$
    and it follows that $J(kN)\subset J(kG)$.
\end{proof}

\vspace{3mm}

\begin{cor}\label{maincor}
    Let $N$ be the $p$-Sylow subgroup of $H(G)$. Then the following assertions hold:
    \begin{enumerate}
        \setlength{\parskip}{0cm}
        \setlength{\itemsep}{0cm}
        \item There exists an isomorphism as partially ordered sets
        \begin{equation}
            \mathrm{s}\tau\textrm{-}\mathrm{tilt}\,kG\cong\mathrm{s}\tau\textrm{-}\mathrm{tilt}\,k(G/N), \label{eqgpred2}
        \end{equation}
        which sends $M\in \mathrm{s}\tau\textrm{-}\mathrm{tilt}\,kG$ to $M/J(kN)M\in \mathrm{s}\tau\textrm{-}\mathrm{tilt}\,k(G/N)$.
        \item The natural embedding $k(G/N)\textrm{-}\mathrm{mod}\rightarrow kG\textrm{-}\mathrm{mod}$ induces a bijection
        \begin{equation}
            \mathrm{sbrick}\,k(G/N)\xrightarrow{\sim}\mathrm{sbrick}\,kG. \label{eqgpredsbrick2}
        \end{equation}
        \item The bijection in (b) preserves left finiteness and the following diagram commutes:
        $$
        \begin{tikzcd}
            \mathrm{s}\tau\textrm{-}\mathrm{tilt}\,kG \ar[r,"\eqref{eqgpred2}"] \ar[d,"\eqref{eqsbrick}"] & \mathrm{s}\tau\textrm{-}\mathrm{tilt}\,k(G/N), \ar[d,"\eqref{eqsbrick}"] \\
            \mathrm{f_L}\textrm{-}\mathrm{sbrick}\,kG & \mathrm{f_L}\textrm{-}\mathrm{sbrick}\,k(G/N) \ar[l,"\eqref{eqgpredsbrick2}"].
        \end{tikzcd}
        $$
        \item The natural surjection $kG\rightarrow k(G/N)$ induces a bijection between the set of blocks of $kG$ and that of $k(G/N)$. In particular, this bijection sends $B_0(kG)$ to $B_0(k(G/N))$.
        \item If a block $B$ of $kG$ corresponds to a block $B'$ of $k(G/N)$ under the bijection in (d), then the isomorphism in (a) induces an isomorphism as partially ordered sets
        \begin{equation}
            \mathrm{s}\tau\textrm{-}\mathrm{tilt}\,B\cong\mathrm{s}\tau\textrm{-}\mathrm{tilt}\,B'. \label{eqgpblockred2}
        \end{equation}
        \item If a block $B$ of $kG$ corresponds to a block $B'$ of $k(G/N)$ under the bijection in (d), then the natural embedding $k(G/N)\textrm{-}\mathrm{mod}\rightarrow kG\textrm{-}\mathrm{mod}$ induces a bijection
        \begin{equation}
            \mathrm{sbrick}\,B'\xrightarrow{\sim}\mathrm{sbrick}\,B. \label{eqgpblockredsbrick2}
        \end{equation}
        \item If a block $B$ of $kG$ corresponds to a block $B'$ of $k(G/N)$ under the bijection in (d), then the bijection in (f) preserves left finiteness and the following diagram commutes:
        $$
        \begin{tikzcd}
            \mathrm{s}\tau\textrm{-}\mathrm{tilt}\,B \ar[r,"\eqref{eqgpblockred2}"] \ar[d,"\eqref{eqsbrick}"] & \mathrm{s}\tau\textrm{-}\mathrm{tilt}\,B', \ar[d,"\eqref{eqsbrick}"] \\
            \mathrm{f_L}\textrm{-}\mathrm{sbrick}\,B & \mathrm{f_L}\textrm{-}\mathrm{sbrick}\,B' \ar[l,"\eqref{eqgpblockredsbrick2}"].
        \end{tikzcd}
        $$
    \end{enumerate}
\end{cor}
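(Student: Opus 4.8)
The plan is to deduce Corollary \ref{maincor} from Theorem \ref{mainthm} by iterating the latter along a central filtration of $N$, rather than applying it in a single step. A single step does not suffice here: when $N\not\subseteq Z(G)$ the ideal $J(kN)\cdot kG=\mathrm{Ker}\,\pi$ need not be contained in $(Z(kG)\cap J(kG))\cdot kG$, since $J(kN)$ fails to be central. So first I would record the group theory. Because $H(G)$ is the terminal term of the upper central series, it is nilpotent; hence its Sylow $p$-subgroup $N$ is the unique such subgroup, characteristic in $H(G)$, and therefore characteristic (in particular normal) in $G$. Writing $1=Z_0\leq Z_1\leq\cdots\leq Z_c=H(G)$ for the upper central series, I set $N_i:=N\cap Z_i$. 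Then $N_0=1$, $N_c=N\cap H(G)=N$, each $N_i$ is a normal $p$-subgroup of $G$ (an intersection of characteristic subgroups), and the decisive point is that $N_i/N_{i-1}$ is central in $G/N_{i-1}$. This is the commutator estimate
$$[G,N_i]\subseteq [G,N]\cap[G,Z_i]\subseteq N\cap Z_{i-1}=N_{i-1},$$
where $N\trianglelefteq G$ gives $[G,N]\subseteq N$ and the definition of the upper central series gives $[G,Z_i]\subseteq Z_{i-1}$.

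Next, for $1\leq i\leq c$ set $G_i:=G/N_i$. The canonical isomorphism $G_{i-1}/(N_i/N_{i-1})\cong G_i$ exhibits $\bar N_i:=N_i/N_{i-1}$ as a $p$-subgroup of $Z(G_{i-1})$, so Theorem \ref{mainthm} applies verbatim to the pair $(G_{i-1},\bar N_i)$ and furnishes all seven assertions relating $kG_{i-1}$ and $kG_i$. I would then obtain Corollary \ref{maincor} by composing these $c$ instances along $kG=kG_0\to kG_1\to\cdots\to kG_c=k(G/N)$: compositions of poset isomorphisms are poset isomorphisms, giving (a) and (e); the inflation-induced bijections on (left finite) semibricks compose to the bijection induced by inflation along $\pi$, giving (b) and (f); and the individual commutative squares, block correspondences and ``principal block to principal block'' assignments paste together to yield (c), (d) and (g).

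The one point needing more than the word ``compose'' is the explicit formula in (a). Here I would use that the reduction bijection of Theorem \ref{mainthm}(a) is realized by base change: on modules it is $M\mapsto kG_i\otimes_{kG_{i-1}}M\cong M/J(k\bar N_i)M$ (Corollary \ref{module}). Since these functors compose to $k(G/N)\otimes_{kG}-$, the total map sends $M$ to $k(G/N)\otimes_{kG}M=M/(\mathrm{Ker}\,\pi)M=M/J(kN)M$, the final identification being Proposition \ref{mainprop}. The same composition identifies the total inflation functor $k(G/N)\textrm{-}\mathrm{mod}\to kG\textrm{-}\mathrm{mod}$ with the one used in (b).

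The main obstacle is not depth but bookkeeping: one must verify the filtration property $N_i/N_{i-1}\subseteq Z(G/N_{i-1})$ — the commutator estimate above, where normality of $N$ is genuinely used — and then confirm that every piece of structured data in Theorem \ref{mainthm} survives composition. The explicit description in (a) is the most delicate of these, and the tensor-functor viewpoint is what makes its stability under composition transparent.
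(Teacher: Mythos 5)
Your proposal is correct and takes essentially the same route as the paper: both filter $N$ by $N_i:=N\cap Z_i$ (after observing that $N$ is characteristic in $G$ as the unique Sylow $p$-subgroup of the nilpotent characteristic subgroup $H(G)$), verify $N_i/N_{i-1}\leq Z(G/N_{i-1})$ — your commutator estimate $[G,N_i]\subseteq[G,N]\cap[G,Z_i]\subseteq N\cap Z_{i-1}$ is exactly the paper's element-wise computation $g^{-1}n^{-1}gn\in N\cap Z_{i-1}$ — and then apply Theorem \ref{mainthm} inductively along $kG\twoheadrightarrow k(G/N_1)\twoheadrightarrow\cdots\twoheadrightarrow k(G/N)$. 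Your explicit check that the composed bijections send $M$ to $M/J(kN)M$, by composing the base-change functors $k(G/N_i)\otimes_{k(G/N_{i-1})}-$ and identifying $\mathrm{Ker}\,\pi$ with $J(kN)\cdot kG$ via Proposition \ref{mainprop}, is a detail the paper leaves implicit, and a worthwhile addition.
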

\begin{proof}
    Note that $Z_i$ is a characteristic subgroup of $G$ for any integer $i\geq 0$. In particular, $H(G)$ is a characteristic subgroup of $G$. Since $H(G)$ is a finite nilpotent group, $N$ is the unique $p$-Sylow subgroup of $H(G)$ and hence $N$ is a characteristic subgroup of $H(G)$. Thus, $N$ is a characteristic subgroup of $G$. Therefore, $N\cap Z_i$ is a characteristic subgroup of $G$ for any integer $i\geq0$.\\
    \indent The factor group series
    $$G\twoheadrightarrow G/(N\cap Z_1)\twoheadrightarrow G/(N\cap Z_2)\twoheadrightarrow\cdots\twoheadrightarrow G/(N\cap H(G))=G/N$$
    induces the quotient group algebra series
    $$kG\twoheadrightarrow k(G/(N\cap Z_1))\twoheadrightarrow k(G/(N\cap Z_2))\twoheadrightarrow\cdots\twoheadrightarrow k(G/N).$$
    Thus, it suffices to show $(N\cap Z_i)/(N\cap Z_{i-1})\leq Z(G/(N\cap Z_{i-1}))$ for all integers $i>0$ because, if true, we can apply Theorem \ref{mainthm} to the group algebras $k(G/(N\cap Z_{i-1}))\twoheadrightarrow k(G/(N\cap Z_i))$ inductively. It is enough to show that $g^{-1}n^{-1}gn\in N\cap Z_{i-1}$ for any $g\in G$ and $n\in N\cap Z_i$. We have $g^{-1}n^{-1}gn\in Z_{i-1}$ because $Z_i/Z_{i-1}=Z(G/Z_{i-1})$. Since $N$ is normal in $G$, it follows that $(g^{-1}n^{-1}g)n\in N$. Therefore, we have $g^{-1}n^{-1}gn\in N\cap Z_{i-1}$. The proof is now complete.
\end{proof}

\vspace{3mm}

\indent Considering the correspondence over the principal blocks, we can relax the assumption in Theorem \ref{mainthm} or Corollary \ref{maincor}. We find that central extensions of finite groups have no effect on the structure of support $\tau$-tilting modules or left finite semibricks.\\

\begin{thm}\label{genmainthm}
    Let $N$ be the $p$-Sylow subgroup of $Z(G)$. Then the following assertions hold:
    \begin{enumerate}
        \setlength{\parskip}{0cm}
        \setlength{\itemsep}{0cm}
        \item There exists an isomorphism as partially ordered sets
        \begin{equation}
            \mathrm{s}\tau\textrm{-}\mathrm{tilt}\,B_0(kG)\cong\mathrm{s}\tau\textrm{-}\mathrm{tilt}\,B_0(k(G/Z(G))), \label{eqgpred3}
        \end{equation}
        which sends $M\in \mathrm{s}\tau\textrm{-}\mathrm{tilt}\,B_0(kG)$ to $M/J(kN)M\in \mathrm{s}\tau\textrm{-}\mathrm{tilt}\,B_0(k(G/Z(G)))$.
        \item The natural embedding $k(G/Z(G))\textrm{-}\mathrm{mod}\rightarrow kG\textrm{-}\mathrm{mod}$ induces a bijection
        \begin{equation}
            \mathrm{sbrick}\,B_0(k(G/Z(G)))\xrightarrow{\sim}\mathrm{sbrick}\,B_0(kG). \label{eqgpredsbrick3}
        \end{equation}
        \item The bijection in (b) preserves left finiteness and the following diagram commutes:
        $$
        \begin{tikzcd}
            \mathrm{s}\tau\textrm{-}\mathrm{tilt}\,B_0(kG) \ar[r,"\eqref{eqgpred3}"] \ar[d,"\eqref{eqsbrick}"] & \mathrm{s}\tau\textrm{-}\mathrm{tilt}\,B_0(k(G/Z(G))), \ar[d,"\eqref{eqsbrick}"] \\
            \mathrm{f_L}\textrm{-}\mathrm{sbrick}\,B_0(kG) & \mathrm{f_L}\textrm{-}\mathrm{sbrick}\,B_0(k(G/Z(G))) \ar[l,"\eqref{eqgpredsbrick3}"].
        \end{tikzcd}
        $$
    \end{enumerate}
\end{thm}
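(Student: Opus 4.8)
The plan is to factor the natural surjection $G \to G/Z(G)$ into a $p'$-step followed by a $p$-step, treating the first with the block isomorphism of Proposition \ref{mainprop'} and the second with the quotient reduction of Theorem \ref{mainthm}. Since $Z(G)$ is a finite abelian group, it splits as $Z(G) = N \times N'$, where $N$ is its $p$-Sylow subgroup and $N'$ its Hall $p'$-subgroup; both are characteristic in $Z(G)$ and hence central (so normal) in $G$. First I would record the elementary facts that $N \cap N' = 1$, that $N'$ is a normal $p'$-subgroup of $G$, that the image $\bar{N} := NN'/N'$ of $N$ in $\bar{G} := G/N'$ is a central $p$-subgroup of $\bar{G}$, and that $\bar{G}/\bar{N} = G/NN' = G/Z(G)$. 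This sets up the chain $G \twoheadrightarrow \bar{G} \twoheadrightarrow \bar{G}/\bar{N} = G/Z(G)$.

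For the $p'$-step I would apply Proposition \ref{mainprop'} to $N'$, obtaining an isomorphism of $k$-algebras $\pi|_{B_0(kG)} : B_0(kG) \xrightarrow{\sim} B_0(k\bar{G})$. An algebra isomorphism induces, in an obviously compatible way, a bijection on support $\tau$-tilting modules and on (left finite) semibricks, and is compatible with Asai's bijection \eqref{eqsbrick}. The points to record are that under this identification a $B_0(kG)$-module is precisely one on which $N'$ acts trivially, since $n\beta = \beta$ for $\beta = |N'|^{-1}\sum_{n \in N'} n$ and $\beta$ acts as the identity on principal-block modules, and that $\pi$ restricts to an isomorphism $kN \xrightarrow{\sim} k\bar{N}$ because $N \cap N' = 1$, so that $\pi$ matches $J(kN)$ with $J(k\bar{N})$. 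Consequently the equivalence $B_0(k\bar{G})\textrm{-mod} \to B_0(kG)\textrm{-mod}$ realizing this step is just inflation along $G \to \bar{G}$.

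For the $p$-step I would apply Theorem \ref{mainthm} to $\bar{G}$ and its central $p$-subgroup $\bar{N}$. Part (d) sends $B_0(k\bar{G})$ to $B_0(k(\bar{G}/\bar{N})) = B_0(k(G/Z(G)))$, so the principal blocks correspond, and parts (e), (f), (g) then supply the order isomorphism $\bar{M} \mapsto \bar{M}/J(k\bar{N})\bar{M}$, the semibrick bijection induced by inflation along $\bar{G} \to \bar{G}/\bar{N}$, and the compatibility with \eqref{eqsbrick}. Composing the two steps yields (a), (b), (c): the composite order isomorphism sends $M \mapsto M/J(kN)M$ (using that $N'$ acts trivially and that $\pi$ identifies $J(kN)$ with $J(k\bar{N})$), the composite semibrick bijection is induced by the single inflation functor $k(G/Z(G))\textrm{-mod} \to kG\textrm{-mod}$ by transitivity of inflation along $G \to \bar{G} \to G/Z(G)$, and the compatibility follows by pasting the two commutative squares.

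The group theory and the formal composition are routine; the step that needs genuine care is checking that the explicit module-level formula survives the composition, specifically that the $p'$-step, being an algebra isomorphism rather than a proper quotient, does not disturb the $J(kN)$-quotient description. This is where I would invoke that $kN'$ is semisimple by Maschke, so $J(kN') = 0$, together with the triviality of the $N'$-action on principal-block modules; these guarantee that the first step leaves the submodule $J(kN)M$ unchanged and that the two inflation functors compose to exactly the inflation appearing in the statement.
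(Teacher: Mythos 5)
Your proposal is correct, and it uses exactly the two ingredients the paper uses---Proposition~\ref{mainprop'} and Theorem~\ref{mainthm}---but composes them in the opposite order, through a different intermediate quotient. The paper factors the surjection as $G\twoheadrightarrow G/N\twoheadrightarrow G/Z(G)$: it first applies Theorem~\ref{mainthm} directly to $G$ and its central $p$-subgroup $N$ (so the explicit formula $M\mapsto M/J(kN)M$ and the principal-block correspondence come for free from parts (d)--(g)), and only then applies Proposition~\ref{mainprop'} to the normal $p'$-subgroup $Z(G)/N$ of $G/N$, whose quotient is $G/Z(G)$; the $p'$-step is thus a terminal algebra isomorphism $B_0(k(G/N))\cong B_0(k(G/Z(G)))$ that merely relabels the target. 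You instead factor through $\bar G=G/N'$ with $N'$ the Hall $p'$-subgroup of $Z(G)$, doing the $p'$-step first and then applying Theorem~\ref{mainthm} to the central $p$-subgroup $\bar N=NN'/N'$ of $\bar G$. This is equally valid---your group-theoretic bookkeeping ($Z(G)=N\times N'$, $\bar N\leq Z(\bar G)$, $\bar G/\bar N\cong G/Z(G)$) is all correct---but your order makes the module-level formula the delicate point, since $M$ must be transported through the block isomorphism before the $J$-quotient is taken; you correctly identify and discharge this: $N'$ acts trivially on principal-block modules via the idempotent $\beta=|N'|^{-1}\sum_{n\in N'}n$, and $\pi$ restricts to an isomorphism $kN\xrightarrow{\sim}k\bar N$ carrying $J(kN)$ to $J(k\bar N)$ because $N\cap N'=1$. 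The paper's order avoids this verification (at the cost of a parallel, but elided, identification of $M/J(kN)M$, a priori a $k(G/N)$-module, with a $k(G/Z(G))$-module), so the two proofs are mirror images of one another; yours is slightly longer but makes explicit a compatibility that the paper leaves implicit.
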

\begin{proof}
    Since $Z(G)/N$ is a normal $p'$-subgroup of $G/N$ and $(G/N)/(Z(G)/N)\cong G/Z(G)$, the natural surjection $k(G/N)\rightarrow k(G/Z(G))$ induces an isomorphism $B_0(k(G/N))\cong B_0(k(G/Z(G)))$ of $k$-algebras by Proposition \ref{mainprop'}. By this isomorphism and Theorem \ref{mainthm}, we have the assertions.
\end{proof}

\vspace{3mm}

\begin{cor}\label{genmaincor}
    Let $N$ be the $p$-Sylow subgroup of $H(G)$. Then the following assertions hold:
    \begin{enumerate}
        \setlength{\parskip}{0cm}
        \setlength{\itemsep}{0cm}
        \item There exists an isomorphism as partially ordered sets
        \begin{equation}
            \mathrm{s}\tau\textrm{-}\mathrm{tilt}\,B_0(kG)\cong\mathrm{s}\tau\textrm{-}\mathrm{tilt}\,B_0(k(G/H(G))), \label{eqgpred4}
        \end{equation}
        which sends $M\in \mathrm{s}\tau\textrm{-}\mathrm{tilt}\,B_0(kG)$ to $M/J(kN)M\in \mathrm{s}\tau\textrm{-}\mathrm{tilt}\,B_0(k(G/H(G)))$.
        \item The natural embedding $k(G/H(G))\textrm{-}\mathrm{mod}\rightarrow kG\textrm{-}\mathrm{mod}$ induces a bijection
        \begin{equation}
            \mathrm{sbrick}\,B_0(k(G/H(G)))\xrightarrow{\sim}\mathrm{sbrick}\,B_0(kG). \label{eqgpredsbrick4}
        \end{equation}
        \item The bijection in (b) preserves left finiteness and the following diagram commutes:
        $$
        \begin{tikzcd}
            \mathrm{s}\tau\textrm{-}\mathrm{tilt}\,B_0(kG) \ar[r,"\eqref{eqgpred4}"] \ar[d,"\eqref{eqsbrick}"] & \mathrm{s}\tau\textrm{-}\mathrm{tilt}\,B_0(k(G/H(G))), \ar[d,"\eqref{eqsbrick}"] \\
            \mathrm{f_L}\textrm{-}\mathrm{sbrick}\,B_0(kG) & \mathrm{f_L}\textrm{-}\mathrm{sbrick}\,B_0(k(G/H(G))) \ar[l,"\eqref{eqgpredsbrick4}"].
        \end{tikzcd}
        $$
    \end{enumerate}
\end{cor}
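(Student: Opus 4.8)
The plan is to reduce Corollary~\ref{genmaincor} to Corollary~\ref{maincor} in exactly the way Theorem~\ref{genmainthm} is deduced from Theorem~\ref{mainthm}: one factors the comparison of $B_0(kG)$ with $B_0(k(G/H(G)))$ through the intermediate algebra $B_0(k(G/N))$. The first factor kills the $p$-part $N$ of the hypercenter and is handled by Corollary~\ref{maincor}; the second factor absorbs the remaining $p'$-part $H(G)/N$ and, at the level of principal blocks, is an honest isomorphism of $k$-algebras coming from Proposition~\ref{mainprop'}.

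The one structural fact I need beyond the hypotheses is that $H(G)$ is nilpotent, which is already invoked in the proof of Corollary~\ref{maincor}. Indeed, since $[G,Z_i]\leq Z_{i-1}$ for all $i$, a fortiori $[H(G),Z_i]\leq Z_{i-1}$, so the chain $1=Z_0\leq Z_1\leq\cdots\leq H(G)$ is a central series of $H(G)$ and $H(G)$ is nilpotent. Consequently its $p$-Sylow subgroup $N$ is the unique (hence characteristic) $p$-Sylow subgroup, and the complementary direct factor $H(G)/N$ is a $p'$-group. Because $H(G)$ is normal in $G$, the group $H(G)/N$ is a normal $p'$-subgroup of $G/N$ with $(G/N)/(H(G)/N)\cong G/H(G)$, so Proposition~\ref{mainprop'} applies to $G/N$ and yields an isomorphism $B_0(k(G/N))\xrightarrow{\sim}B_0(k(G/H(G)))$ of $k$-algebras induced by the natural surjection $k(G/N)\rightarrow k(G/H(G))$.

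It remains to paste the two factors together. Since support $\tau$-tilting modules, semibricks, their left finiteness, and Asai's bijection~\eqref{eqsbrick} are all invariant under $k$-algebra isomorphism, the isomorphism $B_0(k(G/N))\cong B_0(k(G/H(G)))$ transports all the data between these two algebras; combining it with parts (d)--(g) of Corollary~\ref{maincor} applied to the block $B_0(kG)$ (which corresponds to $B_0(k(G/N))$) gives assertions (a)--(c). Tracking the maps: the reduction of Corollary~\ref{maincor} sends $M$ to $M/J(kN)M$, and the algebra isomorphism of Proposition~\ref{mainprop'} is the identity on underlying modules, so the composite is again $M\mapsto M/J(kN)M$ as stated in (a); likewise the embedding $k(G/H(G))\textrm{-}\mathrm{mod}\rightarrow k(G/N)\textrm{-}\mathrm{mod}$ (inflation, an isomorphism onto principal blocks) composed with the embedding of Corollary~\ref{maincor} realizes the bijection~\eqref{eqgpredsbrick4} via the embedding $k(G/H(G))\textrm{-}\mathrm{mod}\rightarrow kG\textrm{-}\mathrm{mod}$. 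I do not expect a real obstacle; the only point requiring any care is the verification that $H(G)/N$ is a $p'$-group, which is precisely what makes Proposition~\ref{mainprop'} applicable and is guaranteed by the nilpotency of the hypercenter.
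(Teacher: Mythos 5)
Your proof is correct, but it takes a genuinely different route from the paper's. The paper proves Corollary \ref{genmaincor} by applying Theorem \ref{genmainthm} inductively along the upper central series, i.e.\ to each surjection $k(G/Z_{i-1})\twoheadrightarrow k(G/Z_i)$, using $Z(G/Z_{i-1})=Z_i/Z_{i-1}$ --- exactly the same induction pattern as in the proof of Corollary \ref{maincor}. You instead perform the reduction in one shot through the factorization $G\twoheadrightarrow G/N\twoheadrightarrow G/H(G)$: parts (d)--(g) of Corollary \ref{maincor} remove the entire $p$-part $N$ of the hypercenter at once, and Proposition \ref{mainprop'}, applied to the normal $p'$-subgroup $H(G)/N$ of $G/N$ with $(G/N)/(H(G)/N)\cong G/H(G)$, removes the $p'$-part via an honest algebra isomorphism $B_0(k(G/N))\cong B_0(k(G/H(G)))$; this mirrors, at the level of $H(G)$, the way Theorem \ref{genmainthm} is deduced from Theorem \ref{mainthm}. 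Your supporting verifications are sound: $[H(G),Z_i]\leq[G,Z_i]\leq Z_{i-1}$ makes $1=Z_0\leq Z_1\leq\cdots\leq H(G)$ a central series of $H(G)$, so $H(G)$ is nilpotent (a fact the paper itself records in the proof of Corollary \ref{maincor}), $N$ is its unique $p$-Sylow subgroup, and $H(G)/N$ is indeed a normal $p'$-subgroup of $G/N$. What your route buys is that the explicit description $M\mapsto M/J(kN)M$ of the bijection \eqref{eqgpred4} falls out in a single step from Corollary \ref{maincor} together with the observation that transport along the isomorphism of Proposition \ref{mainprop'} leaves underlying modules unchanged (every $B_0(k(G/N))$-module has trivial $H(G)/N$-action, so the identification is inflation); in the paper's induction this description arises as a composite of stage-wise quotients by $J(kN_i)$, where $N_i$ is the $p$-Sylow subgroup of $Z_i/Z_{i-1}$, and its identification with the one-step quotient is left implicit. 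What the paper's route buys is strict uniformity with the proof of Corollary \ref{maincor} and no further group-theoretic checking.
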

\begin{proof}
    In the same way as in the proof of Corollary \ref{maincor}, we can apply Theorem \ref{genmainthm} to the group algebras $k(G/Z_{i-1})\twoheadrightarrow k(G/Z_i)$ inductively and prove the assertions.
\end{proof}

\vspace{3mm}

\section{Examples}

(a) We assume $\mathrm{char}\,k=2$ and let $D_{2n}=\left<a,b\mid a^n=b^2=1,b^{-1}ab=a^{-1}\right>$. If $n$ is even, then we have $Z(D_{2n})=\{1,a^{n/2}\}$, so $D_{2n}/Z(D_{2n})=D_{2\cdot (n/2)}$. Hence, in order to determine the structure of $\mathrm{s}\tau\textrm{-}\mathrm{tilt}\,kD_{2n}$, it is sufficient to consider the case $n$ is odd by Theorem $\ref{mainthm}$. When $n$ is odd, the block decomposition of $kD_{2n}$ is
$$kD_{2n}=B_0\times B_1\times \cdots\times B_{(n-1)/2},$$
where $B_0\cong k[x]/(x^2)$ and $B_1, \ldots, B_{(n-1)/2}\cong M_2(k)$ (here $M_2(k)$ denotes the $2\times 2$ matrix algebra over $k$). Therefore, all the support $\tau$-tilting $kD_{2n}$-modules are projective and all the semibricks over $kD_{2n}$ are semisimple for any positive integer $n$.\\

\noindent (b) We assume $p:=\mathrm{char}\,k>0$ and let $SL_n(q)$ be the special linear group of degree $n$ over $\mathbb{F}_q$. Since $|Z(SL_n(q))|=\gcd(n,q-1)$, if $\gcd(n,q-1)$ is a power of $p$, then we have $\mathrm{s}\tau\textrm{-}\mathrm{tilt}\,kSL_n(q)\cong\mathrm{s}\tau\textrm{-}\mathrm{tilt}\,kPSL_n(q)$ by Theorem $\ref{mainthm}$. For the principal blocks, there exists an isomorphism $$\mathrm{s}\tau\textrm{-}\mathrm{tilt}\,B_0(kSL_n(q))\cong\mathrm{s}\tau\textrm{-}\mathrm{tilt}\,B_0(kPSL_n(q))$$
as partially ordered sets for any positive integer $n$ by Theorem \ref{genmainthm}.\\

\noindent (c) We assume $p:=\mathrm{char}\,k>0$ and let $G$ be a finite nilpotent group and $N$ a $p$-Sylow subgroup of $G$. Since $H(G)=G$, we have $\mathrm{s}\tau\textrm{-}\mathrm{tilt}\,kG\cong\mathrm{s}\tau\textrm{-}\mathrm{tilt}\,k(G/N)$ by Corollary $\ref{maincor}$. However, $k(G/N)$ is semisimple by Maschke's theorem. Therefore, all the support $\tau$-tilting $kG$-modules are projective and all the semibricks over $kG$ are semisimple.\\

\section*{Acknowledgement}

This work was supported by JST, the establishment of university fellowships towards the creation of science technology innovation, Grant Number JPMJFS2123. I would like to thank my supervisor Syu Kato for his thoughtful instructions. I would also thank Yuta Kozakai for his constructive suggestions.\\

\end{document}